\newcommand{\old}[1]{{\color{red} #1}}
\newtheorem{thm}{Theorem}[section]
\newtheorem*{thm*}{Theorem}
\newtheorem{cor}[thm]{Corollary}
\newtheorem{lem}[thm]{Lemma}
\newtheorem{prop}[thm]{Proposition}
\theoremstyle{definition}
\newtheorem{dfn}[thm]{Definition}
\newtheorem*{dfn*}{Definition}
\newtheorem{rem}[thm]{Remark}
\newtheorem{ques}[thm]{Question}
\newtheorem*{conj*}{Conjecture}
\newtheorem{ex}[thm]{Example}
\theoremstyle{remark}
\newtheorem*{ac}{Acknowledgments}
\newtheorem{claim}{Claim}
\newtheorem*{claim*}{Claim}
\renewcommand{\qedsymbol}{$\blacksquare$}
\numberwithin{equation}{thm}
\def\fa{\mathfrak{a}}
\def\fm{\mathfrak{m}}
\def\fp{\mathfrak{p}}
\def\sX{\mathsf{X}}
\def\rV{\mathrm{V}}
\def\ann{\mathrm{ann}}
\def\Ass{\operatorname{\mathrm{Ass}}}
\def\cid{\operatorname{\mathrm{CI-dim}}}
\def\codim{\operatorname{\mathrm{codim}}}
\def\cx{\operatorname{\mathrm{cx}}}
\def\depth{\operatorname{\mathrm{depth}}}
\def\Ext{\mathrm{Ext}}
\def\grade{\operatorname{\mathrm{grade}}}
\def\height{\operatorname{\mathrm{ht}}}
\def\pd{\operatorname{\mathrm{pd}}}
\def\Spec{\operatorname{\mathrm{Spec}}}
\def\Supp{\operatorname{\mathrm{Supp}}}
\def\tf{\bot}
\def\Tor{\mathrm{Tor}}
\def\ts{\top}
\def\ul{\underline}
\begin{document}
\title{Serre's condition for tensor products and $n$-Tor-rigidity of modules}
\author{Hiroki Matsui}
\address{Graduate School of Mathematical Sciences\\ University of Tokyo, 3-8-1 Komaba, Meguro-ku, Tokyo 153-8914, Japan}
\email{mhiroki@ms.u-tokyo.ac.jp}
\subjclass[2010]{13C12, 13D05, 13D07, 13H10}
\keywords{}
\thanks{The author was partly supported by JSPS Grant-in-Aid for JSPS Fellows 19J00158.}
\begin{abstract}
In this paper, we study Serre's condition $(S_n)$ for tensor products of modules over a commutative noetherian local ring.
The paper aims to show the following.
Let $M$ and $N$ be finitely generated module over a commutative noetherian local ring $R$, either of which is $(n+1)$-Tor-rigid.
If the tensor product $M \otimes_R N$ satisfies $(S_{n+1})$, then under some assumptions  $\Tor_{i}^R(M, N) = 0$ for all $i \ge 1$.
The key role is played by $(n+1)$-Tor-rigidity of modules.
As applications, we will show that the result recovers several known results.
\end{abstract}
\maketitle
\section{Introduction}
Throughout this paper, $R$ denotes a commutative noetherian local ring with maximal ideal $\fm$, and $\sX^{n}(R)$ denotes the set of prime ideals $\fp$ of $R$ with $\height \fp \le n$.
 
Torsion in tensor products of finitely generated $R$-modules has been well studied deeply by many authors \cite{Aus, HW1, Jor, Lic} with relation to the Auslander-Reiten conjecture (see \cite{CT}).
Such a study is initiated by Auslander and he proved the following result \cite{Aus, Lic}.

\begin{thm}[Auslander, Lichtenbaum] \label{aus}
Let $R$ be a regular local ring and let $M, N$ be finitely generated $R$-modules.
If $M \otimes_R N$ is torsion-free, then $\Tor_{i}^R(M, N) = 0$ for all $i \ge 1$.
\end{thm}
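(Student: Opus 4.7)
The plan is to combine Auslander's rigidity of $\Tor$ over regular local rings with an induction on $d = \dim R$ and a depth-counting argument. Since $R$ is regular, every finitely generated $R$-module has finite projective dimension, and the rigidity theorem asserts that once $\Tor_i^R(M,N) = 0$ for some $i \ge 1$, one has $\Tor_j^R(M,N) = 0$ for every $j \ge i$. Accordingly it suffices to force the vanishing of a single higher $\Tor_i^R(M,N)$.

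First I would induct on $d$. The case $d = 0$ is immediate since $R$ is then a field and both $M$ and $N$ are free. For the inductive step, fix $\fp \in \Spec R \setminus \{\fm\}$. Torsion-freeness is preserved under localization (over the domain $R$, torsion-freeness amounts to injectivity into $(M \otimes_R N) \otimes_R \mathrm{Frac}(R)$), so $(M \otimes_R N)_\fp \cong M_\fp \otimes_{R_\fp} N_\fp$ is torsion-free over the regular local ring $R_\fp$ of dimension strictly smaller than $d$. The inductive hypothesis then yields $\Tor_i^R(M,N)_\fp = 0$ for all $i \ge 1$ and all $\fp \ne \fm$. Consequently each $\Tor_i^R(M,N)$ is supported at most at $\fm$ and has finite length.

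Next, assume for contradiction that $q = \sup\{\, i \ge 1 : \Tor_i^R(M,N) \ne 0 \,\}$, which is finite because $\pd_R M < \infty$, satisfies $q \ge 1$. Since $\Tor_q^R(M,N)$ has finite length, its depth is zero. Feeding this into Auslander's depth formula for a pair with top nonvanishing $\Tor$ in degree $q$, combined with the Auslander--Buchsbaum equalities for $M$ and $N$, yields a strict upper bound on $\depth(M \otimes_R N)$ that is incompatible with torsion-freeness, which demands $\depth(M \otimes_R N) \ge 1$ whenever $M \otimes_R N \ne 0$. The contradiction forces $q = 0$, and rigidity then upgrades this to $\Tor_i^R(M,N) = 0$ for all $i \ge 1$.

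The main obstacle is the final step: securing the sharp depth inequality in the presence of a nonvanishing top $\Tor$. In the unramified case this goes back to Auslander via a change-of-rings spectral sequence and a careful analysis of Koszul cycles; in the ramified case one reduces modulo a well-chosen regular element and invokes Lichtenbaum's extension of rigidity, which is precisely where the Lichtenbaum half of the theorem enters. Granted rigidity and the depth formula, the rest of the argument is a clean localization-and-induction package.
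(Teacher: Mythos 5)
The paper does not give a proof of this theorem; it is quoted as a classical result. The relevant proof in the paper is that of Proposition \ref{ausprop} (attributed to Auslander's Lemma 3.1), which generalizes it: if $N$ has rank, $M$ is $1$-Tor-rigid, and $M \otimes_R N$ is torsion-free, then $\Tor_i^R(M,N)=0$ for $i\ge 1$. Over a regular local ring $R$, every module has rank (as $R$ is a domain), every module is $1$-Tor-rigid by the Auslander--Lichtenbaum rigidity theorem, and torsion-free is equivalent to $(S_1)$, so Proposition \ref{ausprop} specializes to the statement. Its proof is short: reduce to $N$ torsion-free via the torsion-submodule sequence; embed $N$ into a free module $F$ with torsion cokernel $C$; observe in $0 \to \Tor_1^R(M,C) \to M \otimes_R N \to M \otimes_R F$ that $\Tor_1^R(M,C)$ is torsion while $M \otimes_R N$ is torsion-free, forcing $\Tor_1^R(M,C)=0$; then $1$-Tor-rigidity kills $\Tor_i^R(M,C)$ for $i\ge 1$ and hence $\Tor_i^R(M,N) \cong \Tor_{i+1}^R(M,C) = 0$ for $i\ge 1$. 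Your route (induction on $\dim R$, localization to reduce to finite-length $\Tor$'s, then a depth argument) is genuinely different, and carries unnecessary overhead: once rigidity is taken as a black box, as you do, no localization or induction is needed.

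More importantly, your key step has a gap. You claim that from $\depth \Tor_q^R(M,N)=0$, $q\ge 1$, Auslander's depth formula combined with Auslander--Buchsbaum yields an upper bound on $\depth(M\otimes_R N)$ incompatible with torsion-freeness. But the formula you are invoking (Auslander's Theorem 1.2: $\depth N = \depth \Tor_q^R(M,N) + \pd_R M - q$ when $\depth \Tor_q^R(M,N)\le 1$) constrains only $\depth N$, $\pd_R M$, $q$, and $\depth \Tor_q^R(M,N)$. Feeding in $\depth \Tor_q^R(M,N) = 0$ and Auslander--Buchsbaum gives $\depth M + \depth N + q = \depth R$, a relation among $M$, $N$, $q$ alone; it says nothing about $\depth (M\otimes_R N) = \depth \Tor_0^R(M,N)$, which is not the top homology and is not controlled by that theorem. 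To make your contradiction precise you would need something like $\depth(M\otimes_R N) \le \depth \Tor_q^R(M,N)$ in the finite-length setting, and you neither state nor justify such an inequality. You also conflate the rigidity theorem (the genuinely hard part, where Auslander's spectral sequence for the unramified case and Lichtenbaum's reduction enter) with the comparatively easy torsion-free statement deduced from it. The cleanest way to close the gap is simply the free-embedding argument of Proposition \ref{ausprop}, which replaces your depth step entirely.
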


Three decades later, Huneke and Wiegand \cite{HW1, HW2} generalized the Auslander's result for hypersurface local rings, which is known as the {\it second rigidity thoeorem}.

\begin{thm}[Huneke-Wiegand] \label{hw}
Let $R$ be a hypersurface local ring and let $M, N$ be finitely generated $R$-modules, either of which has rank. 
If $M \otimes_R N$ is reflexive, then $\Tor_{i}^R(M, N) = 0$ for all $i \ge 1$.
\end{thm}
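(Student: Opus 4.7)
The plan is to derive Theorem \ref{hw} as the $n=1$ instance of the main theorem announced in the abstract. Three conditions must be matched: $2$-Tor-rigidity of one of $M$, $N$; the Serre condition $(S_2)$ on $M \otimes_R N$; and whatever auxiliary assumption the main theorem imposes, which in this setting will be supplied by the rank hypothesis.

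First I would invoke Murthy's rigidity theorem: every pair of finitely generated modules over a hypersurface is $2$-Tor-rigid. This rests on Eisenbud's matrix factorization theorem, which shows that a minimal free resolution over a hypersurface becomes $2$-periodic, so that once two consecutive $\Tor$s vanish all subsequent ones do. Hence both $M$ and $N$ are $(n+1)$-Tor-rigid with $n=1$. Second, since a hypersurface is Gorenstein, a finitely generated $R$-module is reflexive if and only if it is torsion-free and satisfies $(S_2)$ (Auslander-Bridger); consequently reflexivity of $M \otimes_R N$ supplies the required $(S_{n+1}) = (S_2)$ hypothesis.

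The step I expect to be most delicate is reconciling the rank hypothesis with the auxiliary assumption in the main theorem. If $M$ has rank, then $M_{\fp}$ is free for every $\fp \in \Ass R$, so $\Tor_i^R(M,N)$ is supported in strictly positive height; this should match the support- or grade-type clause the main theorem asks for, and together with the $2$-periodicity of resolutions it allows one to localize at low-codimension primes and run an induction on height. A parallel argument handles the case where $N$ has rank. Once these three ingredients are in place, the main theorem applied with $n=1$ immediately yields $\Tor_i^R(M,N) = 0$ for all $i \geq 1$, recovering Theorem \ref{hw}.
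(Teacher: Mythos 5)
The paper does not prove Theorem~\ref{hw}: it is quoted from Huneke--Wiegand \cite{HW1, HW2} as background, so there is no internal proof to compare against. Your plan is to extract it as the $n=1$ instance of Theorem~\ref{2srt}. The first two ingredients you name are sound: every finitely generated module over a hypersurface is $2$-Tor-rigid (Example~\ref{ex}(a) with $c=1$), and the paper already asserts that over a hypersurface reflexivity is equivalent to $(S_2)$. The match with condition~(3), however, fails. With $n=1$ that condition demands $\codim \Tor_i^R(M,N) \ge 2$ for $i \gg 0$, i.e.\ vanishing of $\Tor_i$ after localizing at \emph{every} prime of height $\le 1$. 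The rank hypothesis only guarantees that $N_\fp$ is free at the height-zero primes $\fp \in \Ass R$, which yields $\codim \Tor \ge 1$ --- one short of what is required. You acknowledge this by proposing to ``localize at low-codimension primes and run an induction on height,'' but at a singular height-one prime $\fp$ the local ring $R_\fp$ is a one-dimensional hypersurface with $n=1 \ge \dim R_\fp$, so Theorem~\ref{2srt} is vacuous there and cannot drive the induction; vanishing of $\Tor$ over a one-dimensional hypersurface is exactly the technical base case Huneke and Wiegand had to establish by separate means.

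There is also a second, smaller gap: conditions~(1) and~(2) of Theorem~\ref{2srt} require $N$ to satisfy $(S_1)$ and $M$ to be stably a first syzygy, whereas Theorem~\ref{hw} assumes only that $M \otimes_R N$ is reflexive. Reducing to torsion-free $M$ and $N$ takes a preliminary argument of the kind carried out in Proposition~\ref{ausprop}, and that reduction again leans on the rank hypothesis. The paper itself never claims that the main theorem recovers Theorem~\ref{hw}; what it does recover is Dao's Theorem~\ref{dao} (as Corollary~\ref{srt}), where the $(S_c)$ hypotheses on $M$, $N$ and the codimension bound on the $\Tor$'s are imposed explicitly rather than deduced from rank and reflexivity.
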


Recall that a finitely generated $R$-module $M$ satisfies the {\it Serre's condition $(S_n)$} if the inequality $\depth_{R_\fp} (M_\fp) \ge \min \{n, \height \fp \}$ holds for all $\fp \in \Spec R$.
We note that a non-zero finitely generated $R$-module $M$ is torsion-free if and only if it satisfies $(S_1)$ over a regular local ring and that $M$ is reflexive if and only if it satisfies $(S_2)$ over a hypersurface local ring.
From this observation, Huneke, Jorgensen and Wiegand \cite{HJW} asked the following question.

\begin{ques}[Huneke-Wiegand-Jorgensen]
Let $R$ be a complete intersection local ring of codimension $c$ and $M$, $N$ finitely generated $R$-modules. 
If $M \otimes_R N$ satisfies $(S_{c+1})$, then does $\Tor_{i}^R(M, N) = 0$ hold for all $i \ge 1$?
\end{ques}
\noindent
The answer to this question is given by Dao \cite{Dao} under some extra assumptions on $M$ and $N$:

\begin{thm}[Dao]\label{dao}
Let $R$ be a complete intersection local ring of codimension $c$ whose completion is a quotient of an unramified regular local ring.
Let $M, N$ be finitely generated $R$-modules.
Assume:
\begin{enumerate}[\rm(1)]
\item
$N_\fp$ is free for $\fp \in \sX^c(R)$.
\item	
$M$ and $N$ satisfy $(S_c)$.
\end{enumerate}
If $M \otimes_R N$ satisfies $(S_{c+1})$, then $\Tor_i^R(M, N) = 0$ for all $i \ge 1$.
\end{thm}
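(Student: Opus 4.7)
The overall plan is to establish $\Tor_i^R(M,N) = 0$ for $i \ge 1$ by verifying vanishing locally at every $\fp \in \Spec R$, via induction on $\height \fp$. Two tools specific to complete intersections drive the induction: a Tor-rigidity theorem that collapses the infinite vanishing question into a finite one, and a depth inequality that forces the finitely many remaining $\Tor$-modules to vanish, given the $(S_{c+1})$ hypothesis.

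The base case $\height \fp \le c$ is immediate from (1): $N_\fp$ is free, so $\Tor_i^{R_\fp}(M_\fp, N_\fp) = 0$ for every $i \ge 1$. For the inductive step, fix $\fp$ with $\height \fp \ge c+1$ and assume $\Tor_i^{R_\fq}(M_\fq, N_\fq) = 0$ for every $i \ge 1$ and every $\fq \subsetneq \fp$. Localizing at $\fp$, each $\Tor_i^{R_\fp}(M_\fp, N_\fp)$ is supported only at the maximal ideal of $R_\fp$ and hence has finite length. I would then invoke the key rigidity input, due to Dao: under the hypothesis that $\widehat{R}$ is a quotient of an unramified regular local ring, and since $N$ is free on $\sX^c(R)$, the module $N$ is $(c+1)$-Tor-rigid over $R$ --- vanishing of any $c+1$ consecutive $\Tor_i^R(M,N), \ldots, \Tor_{i+c}^R(M,N)$ with $i \ge 1$ forces $\Tor_j^R(M,N) = 0$ for every $j \ge i$. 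This rigidity is proven through Hochster's theta pairing, whose vanishing on the smooth locus requires the unramifiedness assumption.

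With $(c+1)$-rigidity in hand, it suffices to verify vanishing of $\Tor_1^{R_\fp}(M_\fp,N_\fp), \ldots, \Tor_{c+1}^{R_\fp}(M_\fp,N_\fp)$. For this I would use an Auslander/Dao-type depth inequality over complete intersections: if $q \ge 1$ is the largest index with $\Tor_q^{R_\fp}(M_\fp, N_\fp) \ne 0$ and that module has finite length, then $\depth(M \otimes_R N)_\fp$ is bounded above by an explicit expression in $\depth M_\fp$, $\depth N_\fp$, $\depth R_\fp$, and $q$. The $(S_{c+1})$ hypothesis forces $\depth(M \otimes_R N)_\fp \ge c+1$, while $(S_c)$ on $M, N$ gives $\depth M_\fp, \depth N_\fp \ge c$, and $R_\fp$ is Cohen-Macaulay of depth $\height \fp \ge c+1$. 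Balancing these inequalities should contradict the existence of a nonzero finite-length $\Tor$ at $\fp$, completing the induction.

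The main obstacle I foresee is calibrating the depth inequality to mesh with the hypotheses. Several versions exist (Auslander's classical one under finite projective dimension, and various generalizations to complete intersections due to Iyengar, Araya--Yoshino, Dao), each giving slightly different numerical bounds; selecting the right one so that the numerics interact cleanly with $(S_{c+1})$, $(S_c)$, and the codimension $c$ is delicate. Particular care will be needed at $\height \fp = c+1$, where the Serre conditions are tightest and there is the least room to absorb the correction term $q$.
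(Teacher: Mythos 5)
Your proposal takes a genuinely different route from the paper. The paper recovers Theorem~\ref{dao} as a special case of Theorem~\ref{2srt}, whose proof works globally: it forms the short exact sequence $0 \to N \to F \to C \to 0$ of Dibaei--Sadeghi, extracts a regular sequence $\underline{x}$ in the common annihilator of $\Tor_1,\ldots,\Tor_n$, reduces modulo high powers of $\underline{x}$, and runs a chain of depth-lemma arguments on syzygy sequences of the $(n+1)$-Tor-rigid module $M'$ to force $\Tor_1(M',C)=\cdots=\Tor_{n+1}(M',C)=0$ before invoking rigidity. Your plan is instead a local induction on $\height\fp$, collapsing to finite-length $\Tor$ modules at a minimal counterexample prime and then appealing to a ``last non-vanishing $\Tor$'' depth inequality. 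That strategy is closer in spirit to Dao's original argument than to this paper's.

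However, there is a genuine gap in the inductive step as you have written it. You define $q$ to be ``the largest index with $\Tor_q^{R_\fp}(M_\fp, N_\fp) \ne 0$'' and then apply a depth inequality involving $q$. But the existence of such a largest index is precisely the eventual vanishing $\Tor_i^{R_\fp}(M_\fp,N_\fp)=0$ for $i\gg 0$, which is part of what you are trying to prove: over a complete intersection, $\Tor$ need not vanish eventually, and the depth formulas in the style of Auslander, Iyengar, or Araya--Yoshino all presuppose either finite projective/CI dimension with $\Tor_{>q}=0$ or an a priori vanishing window. Rigidity does not rescue you here either: to invoke $(c+1)$-Tor-rigidity you must first exhibit $c+1$ consecutive vanishing $\Tor$'s, and your only tool for producing them is the very depth inequality that assumes they exist. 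What the paper does instead (and what you would need) is a chain of depth-lemma estimates on the individual syzygy short exact sequences $0 \to \Omega^{k+1}M' \otimes C \to F_k \otimes C \to \Omega^k M' \otimes C \to 0$, which does not presuppose any vanishing beyond what has already been established step by step.

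A secondary point: you attribute the $(c+1)$-Tor-rigidity of $N$ to Hochster's theta pairing and the unramifiedness hypothesis. In fact $(c+1)$-Tor-rigidity over a complete intersection of codimension $c$ is Murthy's theorem (Example~\ref{ex}(a)) and requires no unramifiedness at all; the theta/eta pairing argument with the unramified hypothesis is what Dao uses to obtain the sharper $c$-Tor-rigidity when $\cx_R(M)<c$ (Example~\ref{ex}(b)). This mislabeling does not by itself break your argument, but it does mean your proposal does not actually use the unramified hypothesis anywhere, and it obscures which rigidity statement is doing the work.
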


The key role of the proof of Theorem \ref{dao} is played by {\it $n$-Tor-rigidity} of modules; see Definition \ref{ntr} for the definition of $n$-Tor-rigid modules.
Indeed, the proof relies upon the fact that the vanishing of {\it eta pairing}, which is a generalization of {\it Hochster's theta pairing}, implies that $c$-Tor-rigidity of a pair of finitely generated $R$-modules.
The unramified assumption on an embedded regular local ring is used here.
In addition, it is shown by Murthy \cite{Mur} that over a complete intersection local ring $R$ of codimension $c$, every finitely generated $R$-module is $(c+1)$-Tor-rigid.
In this paper, we consider another direction of a variant of Theorems \ref{aus} and \ref{hw}.
Namely, we consider the following question.
 
\begin{ques}
Let $R$ be a noetherian local ring and $M, N$ finitely generated $R$-modules and $n$ a non-negative integer.
If $M \otimes_R N $ satisfies $(S_{n+1})$ and either $M$ or $N$ is $(n+1)$-Tor-rigid, then does $\Tor_i^R(M, N) = 0$ hold for all $i \ge 1$?
\end{ques}

The aim of this paper is to give an answer to this question.
Precisely, we prove the following result.

\begin{thm}[see Theorem \ref{2srt}]\label{main}
Let $n$ be a non-negative integer and let $R$ be a noetherian local ring satisfying $(S_n)$ for $n \ge 1$ and $(S_1)$ for $n=0$.
Let $M$ and $N$ be finitely generated $R$-modules.
Assume:
\begin{enumerate}[\rm(1)]
\item
$N$ satisfies $(S_{n})$ and $N_\fp$ is free for $\fp \in \sX^n(R)$.
\item
$M$ is stably isomorphic to the $n$th syzygy of an $(n+1)$-Tor-rigid module.
\end{enumerate}
If $M \otimes_R N$ satisfies $(S_{n+1})$, then $\Tor_{i}^R(M, N) = 0$ for all $i \ge 1$.
\end{thm}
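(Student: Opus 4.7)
The plan is to translate the conclusion into a Tor-vanishing for the pair $(L,N)$, and to produce the required vanishings by iterating a support-versus-associated-primes squeeze through short exact sequences coming from a projective resolution of $L$.

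Since stably isomorphic modules share Tor in positive degree, the stable isomorphism $M\oplus F\cong\syz^n L\oplus G$ with $F,G$ free yields $\Tor_i^R(M,N)\cong\Tor_{i+n}^R(L,N)$ for $i\ge 1$, so the desired conclusion is $\Tor_j^R(L,N)=0$ for $j\ge n+1$. By the $(n+1)$-Tor-rigidity of $L$ it suffices to produce $n+1$ consecutive vanishings; I target $\Tor_j^R(L,N)=0$ for $j=1,\dots,n+1$. Hypothesis (1) that $N_\fp$ is free for every $\fp\in\sX^n(R)$ gives the support bound
$$\Supp_R\Tor_i^R(L,N)\subseteq\{\fp\in\Spec R:\height\fp\ge n+1\}\quad(i\ge 1).$$

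For each $k\ge 1$ the short exact sequence $0\to\syz^k L\to P_{k-1}\to\syz^{k-1}L\to 0$, tensored with $N$, gives an embedding $\Tor_k^R(L,N)\hookrightarrow\syz^k L\otimes_R N$ as the kernel of $\syz^k L\otimes_R N\to P_{k-1}\otimes_R N$. The anchor is $k=n$: here $\syz^n L\otimes_R N$ agrees up to free summands with $M\otimes_R N$, which satisfies $(S_{n+1})$, forcing $\Ass(M\otimes_R N)\subseteq\{\fp:\height\fp=0\}$; intersecting with the support bound kills $\Tor_n^R(L,N)$. I then iterate downward: given $\Tor_k^R(L,N)=0$ for some $k\in\{2,\dots,n\}$, the tensored sequence $0\to\syz^k L\otimes_R N\to P_{k-1}\otimes_R N\to\syz^{k-1}L\otimes_R N\to 0$ is short exact, and the depth lemma (using $(S_n)$ on $R$ and $N$ together with the $(S_{n+1})$ depth of $M\otimes_R N$) gives $\depth(\syz^{k-1}L\otimes_R N)_\fp\ge 1$ at every $\fp$ with $\height\fp\ge n+1$; the squeeze then kills $\Tor_{k-1}^R(L,N)$. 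Iterating from $k=n$ down to $k=2$ produces $\Tor_k^R(L,N)=0$ for all $k=1,\dots,n$.

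For the remaining index $k=n+1$ (equivalently $\Tor_1^R(M,N)=0$), the vanishing $\Tor_n^R(L,N)=0$ identifies $\syz^{n+1}L\otimes_R N/\Tor_{n+1}^R(L,N)$ with the submodule $K\hookrightarrow N^{\oplus r}$ fitting in $0\to K\to N^{\oplus r}\to M\otimes_R N\to 0$; the depth lemma applied here (with $(S_{n+1})$ on $M\otimes_R N$ and $(S_n)$ on $N$) gives $K$ the condition $(S_n)$, hence $\Ass K\subseteq\{\fp:\height\fp=0\}$. An associated-primes analysis of the extension $0\to\Tor_{n+1}^R(L,N)\to\syz^{n+1}L\otimes_R N\to K\to 0$, combined with the support bound on $\Tor_{n+1}^R(L,N)$ and the structural fact that $\syz^{n+1}L$ is a first syzygy of $M=\syz^n L$ (so has depth $\ge n$ at primes of height $\ge n$ via the $(S_n)$ hypothesis on $R$), closes the squeeze and forces $\Tor_{n+1}^R(L,N)=0$. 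The $(n+1)$-Tor-rigidity of $L$ applied to $\Tor_1^R(L,N)=\cdots=\Tor_{n+1}^R(L,N)=0$ then delivers $\Tor_j^R(L,N)=0$ for all $j\ge 1$, completing the proof. The main obstacle will be precisely this last step: the descending iteration inherits Serre conditions cleanly from the anchor $(S_{n+1})$ on $M\otimes_R N$, whereas the analysis at $\syz^{n+1}L\otimes_R N$ requires balancing the depth of the subquotient $K$ against the structural depth of $\syz^{n+1}L$ itself to push $\Ass(\syz^{n+1}L\otimes_R N)$ out of the region $\{\height\fp\ge n+1\}$ where $\Tor_{n+1}^R(L,N)$ is supported.
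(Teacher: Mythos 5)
Your reduction (replace $M$ by $\syz^n L$ and aim for $\Tor_j^R(L,N)=0$ for $j=1,\dots,n+1$) is correct, and the anchor-plus-iteration you describe does successfully establish $\Tor_j^R(L,N)=0$ for $j=1,\dots,n$: starting from $\depth(\syz^n L\otimes_R N)_\fp\ge n$ at primes of height $\ge n+1$ (coming from $(S_{n+1})$ on $M\otimes_R N$ and $(S_n)$ on $N$), the embedding $\Tor_k^R(L,N)\hookrightarrow\syz^k L\otimes_R N$ together with the support bound kills $\Tor_n^R(L,N)$, the resulting short exact sequence plus the depth lemma gives $\depth(\syz^{n-1}L\otimes_R N)_\fp\ge n-1$, and so on down to $\Tor_1^R(L,N)=0$. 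This part of your argument is in fact a pleasant simplification of the paper's treatment of the corresponding vanishings, which instead goes through a grade computation and a technical lemma (Lemma~\ref{van}) on iterated Tor-annihilation by regular sequences.

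The gap is exactly where you flag it, and it is genuine: you have produced only $n$ consecutive vanishings, $(n+1)$-Tor-rigidity requires $n+1$, and there is no way to obtain $\Tor_{n+1}^R(L,N)=0$ by the methods you describe. The embedding $\Tor_{n+1}^R(L,N)\hookrightarrow\syz^{n+1}L\otimes_R N$ is correct, but to run the associated-primes squeeze you would need $\depth_{R_\fp}(\syz^{n+1}L\otimes_R N)\ge 1$ at primes of height $\ge n+1$, and nothing you have gives this. The depth lemma applied to $0\to\Tor_{n+1}^R(L,N)\to\syz^{n+1}L\otimes_R N\to K\to 0$ only yields $\depth(\syz^{n+1}L\otimes_R N)_\fp\ge\min\{\depth\Tor_{n+1}^R(L,N)_\fp,\depth K_\fp\}$, which collapses to $0$ precisely when $\fp\in\Ass\Tor_{n+1}^R(L,N)$; this is circular. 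The ``structural'' depth of $\syz^{n+1}L$ (which is indeed $\ge n$ at such $\fp$ because $R$ satisfies $(S_n)$) does not control $\depth$ of the tensor product $\syz^{n+1}L\otimes_R N$, since depth of tensor products is only controlled by depth of the factors in the presence of Tor-vanishing, which is what you are trying to prove. Your descending iteration cannot help either, as it lowers the syzygy index while $\syz^{n+1}L\otimes_R N$ sits one step above your anchor.

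What you are missing is precisely the move that the paper makes: instead of working with $N$ directly, use the Dibaei--Sadeghi pushforward $0\to N\to F\to C\to 0$ with $F$ free, $C$ satisfying $(S_{n-1})$, and $\Ext^1_R(C,R)=0$. Tensoring with $M$ yields the four-term exact sequence $0\to\Tor_1^R(M,C)\to M\otimes_R N\to M\otimes_R F\to M\otimes_R C\to 0$, so the top Tor (which in the $(M',C)$-indexing is $\Tor_{n+1}^R(M',C)\cong\Tor_1^R(M,C)$) now embeds into $M\otimes_R N$ itself, where the $(S_{n+1})$ hypothesis gives the depth bound you need for free. The shift $\Tor_{j}^R(M',C)\cong\Tor_{j-1}^R(M',N)$ for $j\ge 2$ means the paper obtains the $n+1$ consecutive vanishings $\Tor_j^R(M',C)=0$, $j=1,\dots,n+1$, where the ``extra'' index $j=1$ is handled by a descending depth-lemma argument of the very type you propose. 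Without passing to $C$, the $(n+1)$st vanishing has no home in a module with a controlled depth, and your argument cannot close.
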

\noindent
Here, we say that two finitely generated $R$-modules $M$ and $N$ are {\it stably isomorphic} if there are finitely generated free $R$-modules $F$ and $G$ such that $M \oplus F \cong N \oplus G$.

If we assume $R$ is a complete intersection local ring of codimension $c$, then every finitely generated $R$-module is $(c+1)$-Tor-rigid and that a finitely generated $R$-module is the $c$th syzygy of some finitely generated $R$-module if and only if it satisfies $(S_c)$.
Thus, this result recovers Theorem \ref{dao}.
Moreover, I want to emphasize that the unramified assumption on an embedded regular local ring is removed in our theorem. 

The organization of this paper is as follows.
The Section 2 is devoted for the preparation of the proof.
In Section 3, we will prove Theorem \ref{2srt} and give several applications of the main theorem, which recovers some known results such as Theorem \ref{dao}.

\section{Preliminaries.}
In this section, we recall several basic definitions including that of an $n$-Tor-rigid module for later use.

\if0\begin{dfn}
For a finitely generated $R$-module $M$, define the {\it codimension} of $M$ to be 
$$
\codim M := \inf \{\height \fp \mid \fp \in \Supp M\}.
$$
From the definition, $\codim M \ge n$ if and only if $M_\fp = 0$ for all $\fp \in \sX^n(R)$.
\end{dfn}\fi

\begin{dfn}(\cite{AGP})
Let $M$ be a finitely generated $R$-module.
A {\it (codimension $c$) quasi-deformation} of $R$ is a diagram $R \rightarrow R' \twoheadleftarrow Q$ of local rings such that $R \rightarrow R'$ is fully faithful and $R' \twoheadleftarrow  Q$ is surjective with kernel generated by a $Q$-regular sequence (of length $c$).
The {\it complete intersection dimension} of $M$ is defined to be
$$
\cid_R (M) := \inf \{\pd_{Q}(M \otimes_R R') - \pd_{Q} R' \mid R \rightarrow R' \twoheadleftarrow Q \mbox{ is a quasi-deformation of } M \}.
$$
\end{dfn}

\begin{dfn}
Let $M$ be a finitely generated $R$-module. 	
The {\it complexity} of $M$ is
$$
\cx_R(M) := \inf\{ c \ge 0 \mid \mbox{there exists $r >0$ such that } \beta_i(M) \le r i^{c-1} \mbox{ for all } i \gg 0 \}.
$$
Here, $\beta_i(M) := \dim_k \Ext_R^i(M, k)$ denotes the $i$th {\it Betti number} of $M$.
\end{dfn}

For the basic properties of complete intersection dimension and complexity, we refer the reader to \cite{Av, AB, AGP}.
We only record the following important fact on complete intersection dimension and complexity.

\begin{prop}[{\cite[Theorem 1.3]{AGP} and \cite[Theorem 2.3]{Gul}}]
The following are equivalent for a noetherian local ring $R$.
\begin{enumerate}[\rm(1)]
\item
$R$ is complete intersection.
\item
$\cid_R (M)< \infty$ for every finitely generated $R$-module $M$.
\item
$\cx_R (M) < \infty$ for every finitely generated $R$-module $M$.
\end{enumerate}
This is the case, $\cx_R(M) \le \codim R$ for every finitely generated $R$-module $M$.
\end{prop}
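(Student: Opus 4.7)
The plan is to prove the three equivalences cyclically, drawing on \cite[Theorem 1.3]{AGP} and \cite[Theorem 2.3]{Gul}. For (1) $\Rightarrow$ (2), I would exhibit a canonical quasi-deformation of $R$. Since $R$ is complete intersection, its completion can be written $\hat{R} = Q/(x_1,\ldots,x_c)$ where $(Q,\fq)$ is a regular local ring, $x_1,\ldots,x_c$ is a $Q$-regular sequence, and $c = \codim R$. The composite $R \to \hat{R} \twoheadleftarrow Q$ is then a codimension-$c$ quasi-deformation of every finitely generated $R$-module. Because $Q$ is regular, both $\pd_Q(M\otimes_R \hat{R})$ and $\pd_Q \hat{R}$ are finite, so $\cid_R M < \infty$. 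This construction will be reused at the very end, since its codimension is exactly $\codim R$.

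For (2) $\Rightarrow$ (3), the key tool is the theory of complete intersection cohomology operators. Given a quasi-deformation $R \to R' \twoheadleftarrow Q$ with $\pd_Q(M\otimes_R R') < \infty$, these operators endow $\Ext^*_{R'}(M\otimes_R R', k')$ with the structure of a finitely generated module over a polynomial ring in $c'$ variables, where $c'$ is the length of the regular sequence in $Q$ defining $R'$. Polynomial growth of the Betti numbers of $M\otimes_R R'$ over $R'$ of degree at most $c'-1$ follows, and faithfully flat descent along $R \to R'$ preserves Betti numbers, so $\cx_R M \le c' < \infty$. Combined with the explicit quasi-deformation from the previous step, this also yields the final bound $\cx_R M \le \codim R$.

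For (3) $\Rightarrow$ (1), which I expect to be the most delicate step, I would reduce to the case $M = k$ and appeal to Gulliksen's theorem on deviations. The Poincar\'e series of $k$ admits the product expansion $\sum_{i \ge 0}\dim_k \Ext_R^i(k,k)\,t^i = \prod_{i \ge 1}(1+t^{2i-1})^{\varepsilon_{2i-1}(R)}/(1-t^{2i})^{\varepsilon_{2i}(R)}$, so the hypothesis $\cx_R k < \infty$ forces polynomial growth of the left hand side; comparing factors forces all deviations $\varepsilon_i(R)$ with $i \ge 3$ to vanish. By the classical characterization of complete intersection rings via deviations, this is exactly the statement that $R$ is complete intersection. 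The main obstacle is precisely this last step: elevating finite complexity of the residue field to a structural statement about the entire deviation sequence relies on the free DG-algebra machinery behind \cite[Theorem 2.3]{Gul}, and no elementary detour through Betti numbers alone seems to suffice.
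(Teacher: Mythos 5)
The paper does not prove this proposition; it simply records it and cites \cite[Theorem 1.3]{AGP} and \cite[Theorem 2.3]{Gul}. Your sketch correctly unpacks exactly those references: the Cohen presentation $\hat R = Q/(\underline{x})$ with $Q$ regular gives the quasi-deformation for (1)$\Rightarrow$(2); preservation of Betti numbers along the flat local map $R\to R'$ together with the finiteness of $\Ext^*_{R'}(M\otimes_R R',k')$ over the polynomial ring of cohomology operators gives (2)$\Rightarrow$(3) and the bound $\cx_R M\le\codim R$; and (3)$\Rightarrow$(1) is Gulliksen's theorem, which you rightly identify as the genuinely hard step. One small caution in (2)$\Rightarrow$(3): the operator ring $k'[\chi_1,\dots,\chi_{c'}]$ sits in cohomological degree $2$, so the Hilbert function is only eventually quasi-polynomial of degree $\le c'-1$ on the even and odd parts separately; this still gives $\cx\le c'$, but is worth saying. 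Also, your parenthetical phrase ``comparing factors forces all deviations $\varepsilon_i(R)$ with $i\ge3$ to vanish'' is not an elementary consequence of the product formula alone; it relies on Gulliksen's gap theorem for deviations (if $\varepsilon_i(R)=0$ for some $i\ge2$ then $\varepsilon_j(R)=0$ for all $j\ge i$) and the subsequent growth estimates, which is precisely the content of \cite[Theorem 2.3]{Gul} that you acknowledge you are invoking rather than reproving. In short: correct in outline, same route as the references the paper cites, with the (3)$\Rightarrow$(1) leg left as a black box just as the paper leaves it.
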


Here, we give the definition of an $n$-Tor-rigid module.

\begin{dfn}\label{ntr}
Let $n$ be a positive integer.
We say that a finitely generated $R$-module $M$ is {\it $n$-Tor-rigid} if for a finitely generated $R$-module $N$ and a positive integer $t$, the $n$-consecutive vanishing $\Tor_i^R(M, N) = 0$ for $i = t, t+1, \ldots, t+n-1$ implies that $\Tor_i^R(M, N) = 0$ for all $i \ge t$.
\end{dfn}

There are various examples of n-Tor-rigid modules in the literature.

\begin{ex}\label{ex}
\begin{enumerate}[\rm(a)]
\item
(\cite[Corollary 1.9]{Mur}) 
Assume that $R$ is a complete intersection local ring of codimension $c$.
Then every finitely generated $R$-module is $(c+1)$-Tor-rigid.
\item
(\cite[Corollary 6.8]{Dao})
Let $R$ be a local complete intersection ring of codimension $c>0$ whose completion is a quotient of an unramified regular local ring.
Then a finitely generated $R$-module $M$ is $c$-Tor-rigid if $\cx_R(M) < c$.
\item
(cf. \cite[Corollary 4.3]{CD} and \cite[Proposition 2.3]{Jor1})
Let $M$ be a finitely generated $R$-module.
If $\cid_R(M) = 0$ and $\cx_R(M) = c$, then $M$ is $(c+1)$-Tor-rigid.
\item
(\cite[Theorem 5(ii)]{Bur})
Let $I$ be an ideal satisfying $\fm I \neq \fm (I : \fm)$.
Then $R/I$ and hence $I$ is $2$-Tor-rigid.
\item
(\cite[Lemma, page 316]{LV})
Let $M$ be an $R$-module with $\fm M \neq 0$.
Then $\fm M$ is $2$-Tor-rigid.
\end{enumerate}
\end{ex}

\if0
In general, it is difficult to check whether a given module is $n$-Tor-rigid or not.
For a maximal Cohen-Macaulay module, we can check $n$-Tor-rigidity using the complexity:

\begin{lem}(cf. \cite{})
Let $R$ be a Cohen-Macaulay local ring and $M$ a finitely generated $R$-module.
If $\cid_R (M) = 0$ and $\cx_R(M) = c$, then $M$ is $(c+1)$-Tor-rigid.
\end{lem}

\begin{proof}
We proceed by induction on $c$.
If $c=0$, then the statement is trivial since $M$ is free.

Assume $c>0$ and $\Tor_i^R(M,N) = 0$ for some $t >0$ and $t \le i \le n+c$.
It follows from \cite[4.1.3]{AB} and \cite[Remark 4.1]{CD} that there is a quasi-deformation $R \to R' \twoheadleftarrow S$ of codimension $c$ such that $\pd_S(R' \otimes_R M) < \infty$, $R'/\fm R'$ is artinian, and $S$ has infinite residue field.
Moreover, $\cid_{R'}(R' \otimes_R M) = \cid_R(M) = 0$ by \cite[1.11]{AGP} and $\cx_{R'}(R' \otimes_R M) = \cx_R(M)$ as $R'/\fm R'$ is artinian.
Therefore, we may assume $R = S/(\ul{x})$ such that $S$ has infinite residue field, $\pd_S (M) < \infty$, and $\ul{x}:= x_1, \ldots, x_c$ is an $S$-regular sequence of length $c$. 

Then by \cite[Lemma 2.4]{CD}, there is a local ring $R'$ and a $R'$-regular element $x$ such that $R = R'/(x)$, $\cid_{R'} M = \cid_R M +1$, and $\cx_{R'}(M) = \cx_R(M)-1$.
By induction hypothesis, $\Omega_{R'}^{d-t-1}M$ is $c$-Tor-rigid.
From the long exact sequence
\begin{align*}
\cdots \to \Tor_i^R(M, N) \to \Tor_{i+1}^{R'}(M, N) \to \Tor_{i+1}^{R}(M, N) \to \Tor_{i-1}^R(M, N) \to \cdots \\
\cdots \to \Tor_2^R(M, N) \to M \otimes_R N \to \Tor_1^{R'}(M,N) \to \Tor_1^R(M, N) \to 0,
\end{align*}
we get $\Tor_{i}^{R'}(M, N) = 0$ for $n+1 \le i \le n+c$.
Then we get $\Tor_{\ge n+1}^{R'}(M, N) = 0$ since $\Omega_{R'}^{d-t-1}M$ is $c$-Tor-rigid.
Again using the above exact sequence, $\Tor_{n+2i}^R(M,N) \cong \Tor_n^R(M, N) = 0$, $\Tor_{n+2i+1}^R(M,N) \cong \Tor_{n+1}^R(M, N) = 0$ for any $i \ge 0$.
Thus, we are done.
\end{proof}
\fi

We introduce the following fact which is essentially shown by Auslander \cite[Lemma 3.1]{Aus}, since it is the prototypical result of our main theorem.

\begin{prop}\label{ausprop}
Let $R$ be a noetherian local ring and $M, N$ finitely generated $R$-modules.
Assume:
\begin{enumerate}[\rm(1)]
\item
$N$ has a (constant) rank.
\item	
$M$ is $1$-Tor-rigid.
\end{enumerate}
If $M \otimes_R N$ satisfies $(S_1)$, then $\Tor_i^R(M, N) = 0$ for all $i \ge 1$.	
\end{prop}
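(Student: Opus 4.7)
The plan has two stages. First, by the $1$-Tor-rigidity of $M$ (Definition \ref{ntr}), the single vanishing $\Tor_1^R(M,N) = 0$ implies $\Tor_i^R(M,N) = 0$ for every $i \ge 1$. So it suffices to establish the $i=1$ case.

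To prove $\Tor_1^R(M,N) = 0$, I would exploit the rank hypothesis on $N$. Letting $r$ denote the rank of $N$, fit $N$ into a short exact sequence $0 \to R^r \to N \to T \to 0$ in which $T$ is $R$-torsion, meaning $T_\fq = 0$ for every $\fq \in \Ass R$. Tensoring with $M$ yields the long exact sequence
\[
0 \to \Tor_1^R(M,N) \to \Tor_1^R(M,T) \to M^r \xrightarrow{\alpha} M \otimes_R N \to M \otimes_R T \to 0,
\]
so $\Tor_1^R(M,N)$ embeds into the torsion module $\Tor_1^R(M,T)$. In particular $\Tor_1^R(M,N)_\fq = 0$ for every minimal prime $\fq$ of $R$.

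Suppose for contradiction $\Tor_1^R(M,N) \ne 0$, and pick $\fp$ minimal in its support; then $\height \fp \ge 1$ and $\Tor_1^R(M,N)_\fp$ has finite positive length over $R_\fp$. After localizing at $\fp$, the image $J := \operatorname{image}(\alpha_\fp) \subseteq (M \otimes_R N)_\fp$ is a submodule of an $(S_1)$-module and hence inherits $(S_1)$; in particular $\depth_{R_\fp} J \ge 1$. A depth-chase through the short exact sequences $0 \to \Tor_1^R(M,N)_\fp \to \Tor_1^R(M,T)_\fp \to \ker(\alpha_\fp) \to 0$ and $0 \to \ker(\alpha_\fp) \to (M^r)_\fp \to J \to 0$, using that $\ker(\alpha_\fp)$ is torsion (being the image of $\Tor_1^R(M,T)_\fp$), produces the required contradiction. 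This final depth-chase is the technical heart of the argument and is essentially the content of Auslander's \cite[Lemma~3.1]{Aus}; the delicate point is to combine the $(S_1)$-inheritance for $J$ with the torsion structure of $\ker(\alpha_\fp)$ to force the depth-zero module $\Tor_1^R(M,N)_\fp$ to vanish.
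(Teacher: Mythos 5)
Your first step is fine: by $1$-Tor-rigidity of $M$, a single vanishing $\Tor_1^R(M,N)=0$ propagates to all higher degrees, so it suffices to prove $\Tor_1^R(M,N)=0$. The short exact sequence $0 \to R^r \to N \to T \to 0$ with $T$ torsion also exists. But the ``depth-chase'' you invoke to finish is not carried out, and I do not believe it closes as stated; this is where your argument and the paper's genuinely part ways. Your sequence runs in the wrong direction: tensoring with $M$ embeds $\Tor_1^R(M,N)$ into the torsion module $\Tor_1^R(M,T)$, whereas what you actually need is to exhibit a nonzero torsion module as a \emph{submodule of the $(S_1)$-module $M\otimes_R N$}. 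Knowing only that $\Tor_1^R(M,N)_\fp$ and $\ker(\alpha_\fp)$ are torsion over $R_\fp$ gives no depth control: a torsion module can easily have positive depth (for instance $R/(x)$ over $R=k[[x,y]]$), so the sequences $0\to \Tor_1^R(M,N)_\fp \to \Tor_1^R(M,T)_\fp \to \ker(\alpha_\fp)\to 0$ and $0\to\ker(\alpha_\fp)\to M_\fp^r\to J\to 0$ do not force $\depth \Tor_1^R(M,N)_\fp\geq 1$. Moreover the depth lemma applied to the second sequence yields only $\depth\ker(\alpha_\fp)\geq\min\{\depth M_\fp,\, \depth J+1\}$, and nothing in the hypotheses controls $\depth M_\fp$. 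So the contradiction you are asking for does not materialize from the three sequences you list.

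The paper's proof avoids this by using the opposite embedding. It first reduces to the case where $N$ is torsion-free (tensoring $0\to \ts N\to N\to \tf N\to 0$ with $M$ and using that the torsion module $M\otimes_R\ts N$ maps to the $(S_1)$-module $M\otimes_R N$ by the zero map), and then, because the torsion-free module $N$ has rank, embeds $N$ into a free module: $0\to N\to F\to C\to 0$ with $C$ torsion. Tensoring \emph{this} sequence with $M$ places $\Tor_1^R(M,C)$ as a submodule of $M\otimes_R N$; being torsion inside an $(S_1)$-module it must vanish, rigidity of $M$ applied to $C$ then kills all $\Tor_i^R(M,C)$, and a dimension shift gives $\Tor_i^R(M,N)=0$ for $i\geq 1$. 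Note that rigidity is applied to the auxiliary cokernel $C$, not directly to the pair $(M,N)$. If you want to repair your argument, replace $0\to R^r\to N\to T\to 0$ by an embedding of (the torsion-free part of) $N$ into a free module, so that the relevant $\Tor_1$ lands inside $M\otimes_R N$; that is the step your depth-chase cannot substitute for.
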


\begin{proof}
Consider the short exact sequence $0 \to \ts N \to N \to \tf N \to 0$ where $\ts N$ and $\tf N$ are torsion and torsion-free part of $N$, respectively.	
Tensoring this sequence with $M$, we obtain an exact sequence
$$
\Tor_1^R(M, \tf N) \to M \otimes_R \ts N \xrightarrow{\alpha} M \otimes_R N \xrightarrow{\beta} M \otimes_R \tf N \to 0.
$$ 
As $M \otimes_R \ts N$ is torsion and $M \otimes_R N$ is torsion-free, the map $\alpha$ must be zero and thus $\beta$ is an isomorphism.
If it is shown that $\Tor_1^R(M, \tf N) = 0$, then $M \otimes_R \ts N =0$ and hence $\ts N = 0$.
Therefore we may assume that $N$ is torsion-free.

Since $N$ is a torsion-free $R$-module having a rank, we can take a short exact sequence
$$
0 \to N \to F \to C \to 0
$$
with $F$ free and $C$ torsion.
From this short exact sequence, we get an exact sequence
$$
0 \to \Tor_1^R(M, C) \to M \otimes_R N \to M \otimes_R F \to M \otimes_R C \to 0.
$$ 
Since $\Tor_1^R(M, C)$ is torsion and $M \otimes_R N$ is torsion-free, we see that $\Tor_1^R(M, C) = 0$.
Then the $1$-Tor-rigidity of $N$ shows $\Tor_i^R(M, C) = 0$ for all $i \ge 1$ and in particular $\Tor_i^R(M, N) = 0$ for all $i \ge 1$.
\end{proof}

\section{Main theorem and its applications}

The purpose of this section is to prove our main theorem.
Before beginning the proof, we establish several lemmas.

\begin{lem}\label{lem1}
Let $M$ and $N$ be finitely generated $R$-modules, and $\underline{x}:= x_1, \ldots, x_n$ an $N$-regular sequence.
If $\Tor_1^R(M, N/\underline{x}N) = 0$, then $\Tor_1^R(M, N) = 0$.
The converse holds if moreover $\ul{x}$ is regular on $M \otimes_R N$. 
\end{lem}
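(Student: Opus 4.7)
The natural strategy is induction on the length $n$ of the regular sequence, with both implications deduced from the long exact sequence of $\Tor$ associated to the short exact sequence
$$
0 \to N \xrightarrow{x_1} N \to N/x_1N \to 0,
$$
which is available because $x_1$ is $N$-regular.

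For the base case $n=1$, the relevant segment of this long exact sequence reads
$$
\Tor_1^R(M,N) \xrightarrow{x_1} \Tor_1^R(M,N) \to \Tor_1^R(M, N/x_1N) \to M\otimes_R N \xrightarrow{x_1} M\otimes_R N.
$$
If $\Tor_1^R(M, N/x_1N)=0$, then multiplication by $x_1$ is surjective on the finitely generated module $\Tor_1^R(M,N)$, so Nakayama's lemma (using that $x_1\in\fm$, which one may assume in the local setting) forces $\Tor_1^R(M,N)=0$. Conversely, if $\Tor_1^R(M,N)=0$ and $x_1$ is regular on $M\otimes_R N$, the same segment immediately gives $\Tor_1^R(M, N/x_1N)=0$.

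For the inductive step $n\ge 2$, set $N' := N/x_1N$, so that $x_2,\ldots,x_n$ is an $N'$-regular sequence and $N'/(x_2,\ldots,x_n)N' = N/\ul{x}N$. For the forward direction, I would first apply the induction hypothesis to $N'$ with sequence $x_2,\ldots,x_n$ to obtain $\Tor_1^R(M,N')=0$, and then apply the $n=1$ case to $(N,x_1)$ to conclude $\Tor_1^R(M,N)=0$. For the converse, I would use the key identification
$$
M\otimes_R N' \;\cong\; (M\otimes_R N)/x_1(M\otimes_R N),
$$
obtained by right-exactness of the tensor product, to pass the $(M\otimes_R N)$-regularity of $\ul{x}$ to $(M\otimes_R N')$-regularity of $x_2,\ldots,x_n$. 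Combined with $\Tor_1^R(M,N')=0$ from the $n=1$ converse, the induction hypothesis then yields $\Tor_1^R(M, N/\ul{x}N)=0$.

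There is no serious obstacle; the argument is a standard regular-sequence reduction. The only delicate point I expect to watch is the bookkeeping in the converse direction, ensuring that the auxiliary hypothesis on regularity of $\ul{x}$ on $M\otimes_R N$ genuinely propagates to the corresponding hypothesis for $x_2,\ldots,x_n$ on $M\otimes_R N'$ at each step of the induction.
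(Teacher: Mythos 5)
Your proof is correct and follows essentially the same route as the paper: the forward implication is the step-by-step Nakayama argument on the long exact sequence associated to $0 \to N \xrightarrow{x_1} N \to N/x_1N \to 0$, and the converse uses that same sequence together with the identification $M \otimes_R (N/x_1 N) \cong (M \otimes_R N)/x_1(M \otimes_R N)$ to propagate the regularity hypothesis and induct on $n$.
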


\begin{proof}
The first statement follows easily from Nakayama's lemma.

Assume that $\ul{x}$ is regular on both $M \otimes_R N$ and $N$ and that $\Tor_1^R(M, N) = 0$.
From the short exact sequence $0 \to N \xrightarrow{x_1} N \to N/ x_1 N \to 0$, we obtain an exact sequence
$$
\Tor_1^R(M, N) = 0 \to \Tor_1^R(M, N/x_1 N) \to M \otimes_R N \xrightarrow{x_1} M \otimes_R N \to M \otimes_R (N/ x_1 N) \to 0.
$$
Then we have $\Tor_1^R(M, N/x_1 N) = 0$, since $x_1$ is regular on $M \otimes_R N$.
On the other hand, the isomorphism $M \otimes_R (N/x_1 N) \cong (M \otimes_R N) /x_1 (M \otimes_R N)$ shows that the sequence $x_2, \ldots, x_n$ is regular on both $M \otimes_R (N/x_1 N)$ and $N/x_1N$.
Therefore the induction argument on $n$ shows that $\Tor_1^R(M, N /\ul{x} N) = 0$.  
\end{proof}

\begin{lem}\label{van}
Let $M$ and $N$ be finitely generated $R$-modules, and $\underline{x}:= x_1, \ldots, x_n$ an $N$-regular sequence such that $\ul{x} \Tor_{i}^R(M, N) = 0$ for $i = 1, \ldots, n$.
If $\Tor_{n+1}^R(M, N/\ul{x}^{2^{n-1}}N)=0$, then $\Tor_i^R(M, N) = 0$ for $i = 1,2, \ldots, n+1$. 
Here, $\ul{x}^{2^{n-1}}$ denotes the sequence $x_1^{2^{n-1}}, \ldots, x_n^{2^{n-1}}$.
\end{lem}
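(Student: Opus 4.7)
The plan is to introduce the intermediate quotients $N_k := N/(x_1^{2^{n-1}}, \ldots, x_k^{2^{n-1}})N$ for $0 \le k \le n$ (so $N_0 = N$ and $N_n$ is the module appearing in the hypothesis) and then run a descending induction on $k$, propagating the vanishing $\Tor_{n+1}^R(M, N_n) = 0$ down to $T_i := \Tor_i^R(M, N) = 0$ for $i = 1, \ldots, n+1$. Since powers of an $N$-regular sequence remain regular and regular sequences are permutable in a noetherian local ring, each $x_k^{2^{n-1}}$ is $N_{k-1}$-regular, yielding the short exact sequences
$$0 \to N_{k-1} \xrightarrow{\;x_k^{2^{n-1}}\;} N_{k-1} \to N_k \to 0,$$
whose long exact $\Tor$-sequences are the workhorse throughout.

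The key auxiliary step is a filtration-and-annihilator claim: for each $0 \le l \le n-1$ and $l+1 \le i \le n$, the module $\Tor_i^R(M, N_l)$ admits an iterated-extension filtration of length $2^l$ with composition factors in $\{T_{i-l}, \ldots, T_i\}$, and hence is annihilated by $(\ul{x})^{2^l}$. The base case $l = 0$ is exactly the hypothesis $\ul{x} T_i = 0$; for the inductive step, the inductive hypothesis annihilates both $\Tor_i^R(M, N_{l-1})$ and $\Tor_{i-1}^R(M, N_{l-1})$ by $x_l^{2^{n-1}}$, so the long exact sequence associated to the displayed SES at level $l$ collapses into
$$0 \to \Tor_i^R(M, N_{l-1}) \to \Tor_i^R(M, N_l) \to \Tor_{i-1}^R(M, N_{l-1}) \to 0,$$
which doubles both the filtration length and the annihilator power from $(\ul{x})^{2^{l-1}}$ to $(\ul{x})^{2^l}$.

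The descending induction on $k$ then proceeds by a Nakayama-plus-injectivity argument. For the top index $i = n+1$, the long exact sequence combined with $\Tor_{n+1}^R(M, N_k) = 0$ (the hypothesis when $k = n$, the induction hypothesis otherwise) makes $x_k^{2^{n-1}}$ act surjectively on the finitely generated module $\Tor_{n+1}^R(M, N_{k-1})$, so Nakayama kills it since $x_k \in \fm$. For each $i \in \{k, \ldots, n\}$, the long exact sequence instead makes $x_k^{2^{n-1}}$ injective on $\Tor_i^R(M, N_{k-1})$, while the auxiliary claim (with $l = k-1$, using $2^{k-1} \le 2^{n-1}$) supplies the matching annihilation, so the module vanishes. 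Reaching $k = 0$ delivers $T_i = 0$ for all $i \in \{1, \ldots, n+1\}$. The main subtlety I anticipate is that the auxiliary claim does not extend to $i = n+1$ since $T_{n+1}$ is not known to be annihilated by $\ul{x}$; this is why that index must be treated separately by Nakayama rather than by the injectivity-plus-annihilation route, and it also explains the sharpness of the exponent $2^{n-1}$, which is precisely the annihilator power accumulated by $n-1$ rounds of filtration doubling.
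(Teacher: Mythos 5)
Your proof is correct, and while it reuses the paper's key objects, the global architecture of the argument is genuinely different. Both proofs introduce the same ladder $N_k = N/(x_1^{2^{n-1}},\ldots,x_k^{2^{n-1}})N$ and establish essentially the same annihilation claim (your filtration of length $2^l$ with factors among $T_{i-l},\ldots,T_i$ is exactly the paper's statement $\underline{x}^{2^k}\Tor_i^R(M,N_k)=0$, proved there by the same collapse of the long exact sequence). Where you diverge is in how the claim is exploited. The paper runs an outer induction on $n$: it only invokes the claim at level $k=1$, passes to $\Omega M$ and $N_1$ with the shorter sequence $x_2^2,\ldots,x_n^2$, applies the inductive hypothesis on $n$, and then transfers vanishing back to $N$ via Lemma~\ref{lem1}; this requires a separate base case $n=1$, handled with a pullback-square construction producing $\Omega(N/x_1N)$. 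You instead run a single descending induction on $k$, using Nakayama at the top degree $n+1$ (where no annihilation is available) and the injectivity-from-the-LES plus annihilation-from-the-claim pattern in degrees $k,\ldots,n$. Your route never needs Lemma~\ref{lem1}, never needs the pullback diagram, and dispenses with the base case $n=1$ as a special argument --- for $n=1$ your scheme reduces precisely to the injectivity-plus-annihilation step that the paper's pullback diagram is secretly implementing. The trade-off is that you use the full strength of the claim at every level $l=0,\ldots,n-1$, whereas the paper only needs it at $k=1$; but since the claim is proved by induction on $k$ anyway, nothing is saved there. On balance your version is shorter, more uniform, and self-contained within the lemma.
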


\begin{proof}
Notice that	$\Tor_{n+1}^R(M, N) = 0$ holds by Lemma \ref{lem1}.
We prove $\Tor_i^R(M, N) = 0$ for $i = 1, \ldots, n+1$ by induction on $n$.

Suppose that $n=1$.
Consider the following pullback diagram of $N \xrightarrow{x_1} N$ and a free cover $F \to N$:
\if0$$
\begin{CD}
@. @. 0 @. 0 @. \\
@. @. @AAA @AAA @. \\
@. @. N/x_1N @= N/x_1N @. \\
@. @. @AAA @AAA @. \\
0 @>>> \Omega N @>>> F @>>> N @>>> 0 \\
@. @| @AAA @AAx_1A @. \\
0 @>>> \Omega N @>>> \Omega(N/x_1N) @>>> N @>>> 0 \\
@. @. @AAA @AAA @. \\
@. @. 0 @. 0 @. 
\end{CD}
$$\fi
$$
\xymatrix{
& & 0 & 0 \\
& & N/x_1N \ar@{=}[r]\ar[u]& N/x_1N \ar[u]\\
0 \ar[r] & \Omega N \ar[r] \ar@{=}[d] & F \ar[r] \ar[u]& N \ar[r] \ar[u]& 0 \\ 
0 \ar[r] & \Omega N \ar[r] & W \ar[r] \ar[u] & N \ar[r] \ar[u]_{x_1} & 0 \\
& & 0 \ar[u] & 0 \ar[u]
}
$$
Then the middle column means that $W$ is the first syzygy of $N/x_1 N$ i.e., $W = \Omega(N/x_1 N)$.
Applying $M \otimes_R -$ to the middle and the bottom rows, we get a commutative diagram
\if0$$
\begin{CD}
0 @>>> \Tor_1^R(M, N) @>>> M \otimes_R \Omega N \\
@. @AAx_1A @| \\
0 @>>> \Tor_1^R(M, N) @>>> M \otimes_R \Omega N  
\end{CD}
$$\fi
$$
\xymatrix{
\Tor_1^R(M, F) =0 \ar[r] & \Tor_1^R(M, N) \ar[r] & M \otimes_R \Omega N \ar@{=}[d] \\ 
\Tor_1^R(M, \Omega(N/x_1N)) =0 \ar[u] \ar[r] & \Tor_1^R(M, N) \ar[r] \ar[u]^{x_1} & M \otimes_R \Omega N.
}
$$
Thus the assumption $x_1 \Tor_1^R(M, N) = 0$ implies that $\Tor_1^R(M, N) = 0$.

Now let $n \ge 1$ and set $N_{k}:= N/(x_1^{2^{n-1}}, \ldots, x_k^{2^{n-1}})N$ for $k = 0, \ldots, n$.	
First we show:
\begin{claim*}
$\ul{x}^{2^{k}} \Tor_i^R(M, N_k) = 0$ for $k+1 \le i \le n$.
\end{claim*}

\begin{proof}
Let us proceed by induction on $k$.
If $k=0$, there is nothing to prove.
Consider the case of $k \ge 1$ and assume $\ul{x}^{2^{k-1}} \Tor_i^R(M, N_{k-1}) = 0$ for $k \le i \le n$.	
The short exact sequence $0 \to N_{k-1} \xrightarrow{x_k^{2^{n-1}}} N_{k-1} \to N_{k} \to 0$ induces the following commutative diagram
\if0$$
\begin{CD}
\Tor_i^R(M, N_{k-1}) @>>> \Tor_{i}^R(M, N_{k}) @>>> \Tor_{i-1}(M, N_{k-1}) \\
@AAx_j^{2^{k-1}} = 0A @AAx_j^{2^{k-1}}A @AAx_j^{2^{k-1}}=0A  \\ 
\Tor_i^R(M, N_{k-1}) @>>> \Tor_{i}^R(M, N_{k}) @>>> \Tor_{i-1}(M, N_{k-1})
\end{CD}
$$\fi
$$
\xymatrix{
\Tor_i^R(M, N_{k-1}) \ar[r] & \Tor_{i}^R(M, N_{k}) \ar[r] & \Tor_{i-1}(M, N_{k-1}) \\
\Tor_i^R(M, N_{k-1}) \ar[r] \ar[u]_{x_j^{2^{k-1}} = 0} & \Tor_{i}^R(M, N_{k}) \ar[r] \ar[u]_{x_j^{2^{k-1}}} & \Tor_{i-1}(M, N_{k-1}) \ar[u]_{x_j^{2^{k-1}} = 0}
}
$$
with exact rows.
By the induction hypothesis, the left and the right vertical maps are zero.
Hence we obtain $\ul{x}^{2^k} \Tor_i(M, N_{k}) =0$ for $k+1 \le i \le n$.
\renewcommand{\qedsymbol}{$\square$}
\end{proof}
\noindent
In particular, we have $\ul{x}^2 \Tor_i^R(\Omega M, N_1) =0$ for $1 \le i \le n-1 $.
On the other hand, there is an isomorphism
$$
\Tor_n^R(\Omega M, N_1 /(x_2^{2^{n-1}}, \ldots, x_{n}^{2^{n-1}}) N_1) \cong \Tor_{n+1}^R(M, N/\ul{x}^{2^{n-1}} N) =0
$$
from the assumption.
Therefore, the induction hypothesis shows $\Tor_i^R(\Omega M, N_1) =0$ for $1 \le i \le n$.
By Lemma \ref{lem1}, we get $\Tor_i^R(M, N) =0$ for $2 \le i \le n+1$.
Finally, from $\Tor_2^R(M, N_1) =0$ and $x_1^{2} \Tor_i^R(M, N) = 0$ for $i=1,2$, we also obtain $\Tor_1^R(M, N) = 0$.
\if0Using the same argument as above, we have a commutative diagram
$$
\begin{CD}
0 @>>> \Tor_1^R(M, N) @>>> M \otimes_R \Omega N \\
@. @AAx_1^{2^{n-1}}A @| \\
\Tor_1^R(M, \Omega N_1) @>>> \Tor_1^R(M, N) @>>> M \otimes_R \Omega N.  
\end{CD}
$$
As we have already shown that $\Tor_1(M, \Omega N_1) \cong \Tor_1(\Omega M, N_1) =0$, we get $\Tor_1^R(M, N) =0$.\fi
\end{proof}

Recall that the {\it codimension} of a finitely generated $R$-module $M$ is defined to be
$$
\codim M := \inf \{\height \fp \mid \fp \in \Supp M\}.
$$
From the definition, $\codim M \ge n$ if and only if $M_\fp = 0$ for all $\fp \in \sX^n(R)$.

\begin{lem}\label{codim}
Let $M$ and $N$ be finitely generated $R$-modules such that $\cid_{R_\fp} (N_\fp) =0$ for all $\fp \in \sX^n(R)$.
If $\codim \Tor_{i}^R(M, N) \ge n+1$ for all $i \gg 0$, then $\codim \Tor_{i}^R(M, N) \ge n+1$ for all $i \ge 1$.
\end{lem}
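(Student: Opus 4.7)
My plan is to localize at primes in $\sX^n(R)$, reduce to the case of a complete intersection deformation by faithful flatness, and then invoke the Cartan--Eilenberg change-of-rings spectral sequence combined with a downward induction on the homological degree.

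The condition $\codim \Tor_i^R(M,N) \ge n+1$ is equivalent to $\Tor_i^R(M,N)_\fp = 0$ for every $\fp \in \sX^n(R)$, so I fix such a $\fp$, localize, and reduce to proving: if $(R,\fm)$ is a local ring with $\cid_R N = 0$ and $\Tor_i^R(M,N) = 0$ for $i \gg 0$, then $\Tor_i^R(M,N) = 0$ for all $i \ge 1$. By the definition of $\cid_R N = 0$, I pick a quasi-deformation $R \to R' \twoheadleftarrow Q$ with $R' = Q/(\underline{x})$ for a $Q$-regular sequence $\underline{x} = x_1, \ldots, x_c$ and $\pd_Q(N \otimes_R R') = \pd_Q R' = c$. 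Faithful flatness of $R \to R'$ preserves both the hypothesis and the conclusion, so after replacing $(R,M,N)$ with its base change to $R'$, I may assume $R = Q/(\underline{x})$ and $\pd_Q N = c$.

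Since $\underline{x}$ annihilates the $R$-module $N$, the Koszul complex $K(\underline{x};Q) \to R$ tensored with $N$ has zero differentials, so $\Tor_q^Q(R, N) = N^{\binom{c}{q}}$ for $0 \le q \le c$ and vanishes otherwise. Setting $T_p := \Tor_p^R(M, N)$, the Cartan--Eilenberg change-of-rings spectral sequence for $Q \twoheadrightarrow R$ becomes
\[
  E^2_{p,q} = \Tor_p^R\bigl(M,\, \Tor_q^Q(R, N)\bigr) = T_p^{\binom{c}{q}} \Longrightarrow \Tor_{p+q}^Q(M, N),
\]
whose abutment vanishes for $p + q > c$. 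A downward induction then finishes the argument: assuming $T_j = 0$ for all $j \ge t$ with $t \ge 2$, I analyze $(t-1, c)$ on $E^2$. Outgoing differentials $d_r\colon E^r_{t-1, c} \to E^r_{t-1-r,\, c+r-1}$ have $q$-coordinate $> c$ in the target, hence vanish; incoming differentials $d_r\colon E^r_{t-1+r,\, c-r+1} \to E^r_{t-1, c}$ with $r \ge 2$ have source $T_{t-1+r}^{\binom{c}{c-r+1}} = 0$ by induction. Thus $E^\infty_{t-1, c} = E^2_{t-1, c} = T_{t-1}$ is a subquotient of $\Tor_{t-1+c}^Q(M, N) = 0$, so $T_{t-1} = 0$. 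Iterating from any $t$ large enough to satisfy the hypothesis down to $t = 2$ yields $T_i = 0$ for all $i \ge 1$.

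The principal obstacle is the bookkeeping on the top row $q = c$ of the spectral sequence --- simultaneously ruling out nontrivial outgoing differentials (immediate from $\Tor_{>c}^Q(R, N) = 0$) and incoming ones (which relies on the inductive vanishing of $T_{t-1+r}$ for $r \ge 2$). Notably, this argument propagates vanishing \emph{backward} from large $i$ all the way down to $i = 1$, which is strictly stronger than what $(c{+}1)$-Tor-rigidity from Example~\ref{ex}(c) alone would supply.
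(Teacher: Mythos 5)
Your proof is correct, and it takes a genuinely different route from the paper's. Both arguments start the same way: localize at a prime $\fp \in \sX^n(R)$, reducing the lemma to the claim that, when $\cid_{R_\fp}(N_\fp) = 0$, eventual vanishing of $\Tor_i^{R_\fp}(M_\fp, N_\fp)$ forces vanishing for all $i \ge 1$. At that point the paper simply cites \cite[Theorem~4.9]{AB} and stops. You instead re-derive the needed special case from scratch: pass to a quasi-deformation $Q \twoheadrightarrow R' \leftarrow R$ realizing $\cid = 0$ (using faithful flatness of $R \to R'$ to transport both hypothesis and conclusion), compute $\Tor_q^Q(R,N) \cong N^{\binom{c}{q}}$ from the Koszul resolution since $\underline{x}$ kills $N$, and then run the change-of-rings spectral sequence $\Tor_p^R(M,\Tor_q^Q(R,N)) \Rightarrow \Tor_{p+q}^Q(M,N)$ with $\pd_Q N = c$. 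The downward induction along the row $q=c$ is exactly the right bookkeeping: outgoing differentials land above the row $q=c$ and vanish, incoming ones emanate from $T_{t-1+r}$ with $t-1+r \ge t$ and vanish by the inductive hypothesis, so $E^\infty_{t-1,c} = T_{t-1}$ injects into the vanishing abutment of total degree $t-1+c > c$. What this buys you is a self-contained, elementary proof that replaces a black-box citation; what it costs is length. Your closing observation is also accurate: $(c{+}1)$-Tor-rigidity only propagates vanishing forward, whereas the spectral sequence pushes it backward from large degrees, which is precisely what the lemma requires.
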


\begin{proof}
Fix $\fp \in \sX^n(R)$ and it suffices to show that $\Tor_{i}^R(M, N)_\fp = 0$ for all $i \ge 1$.
From the assumption we get $\Tor_{i}^R(M, N)_\fp = 0$ for all $i \gg 0$ and this implies $\Tor_{i}^R(M, N)_\fp = 0$ for all $i \ge 1$ as $\cid_{R_\fp} (N_\fp) = 0$; see \cite[Theorem 4.9]{AB}.
\end{proof}

Now we are ready to show our main theorem.
The idea of the proof has been already appeared in the proof of Proposition \ref{ausprop}.

\begin{thm}\label{2srt} 
Let $n$ be a non-negative integer, and let $R$ be a noetherian local ring satisfying $(S_n)$ if $n \ge 1$ and $(S_1)$ if $n=0$.
Let $M$ and $N$ be finitely generated $R$-modules.
Assume:
\begin{enumerate}[\rm(1)]
\item
$N$ satisfies $(S_{n})$ and $\cid_{R_\fp} (N_\fp) <\infty$ for $\fp \in \sX^n(R)$.
\item
$M$ is stably isomorphic to the $n$th syzygy of an $(n+1)$-Tor-rigid module $M'$,
\item
$\codim \Tor^R_{i}(M, N) \ge n+1$ for all $i \gg 0$,
\end{enumerate}
If $M \otimes_R N$ satisfies $(S_{n+1})$, then $\Tor_{i}^R(M, N) = 0$ for all $i \ge 1$.
\end{thm}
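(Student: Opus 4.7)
The plan is to combine Lemma \ref{codim} with the $(n+1)$-Tor-rigidity of $M'$ to reduce the problem to $n+1$ specific Tor-vanishings, and then to produce these by constructing a suitable regular sequence and invoking Lemma \ref{van}. For the first reduction, assumption (1) together with the $(S_n)$ conditions on $R$ and $N$ forces $\depth R_\fp = \depth N_\fp = \height \fp$ for every $\fp \in \sX^n(R)$, and the Auslander--Buchsbaum-type formula for complete intersection dimension then gives $\cid_{R_\fp}(N_\fp) = 0$; Lemma \ref{codim} therefore upgrades assumption (3) to $\codim \Tor_i^R(M,N) \ge n+1$ for every $i \ge 1$. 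Writing $M \oplus F_1 \cong \syz^n M' \oplus F_2$ with $F_1, F_2$ free, one has $\Tor_i^R(M, N) \cong \Tor_{i+n}^R(M', N)$ for $i \ge 1$, so by the $(n+1)$-Tor-rigidity of $M'$ it suffices to prove the $n+1$ consecutive vanishings $\Tor_i^R(M,N) = 0$ for $i = 1, \ldots, n+1$.

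For the second step I would construct a sequence $\ul{x} = x_1, \ldots, x_n$ that is simultaneously $N$-regular and $M \otimes N$-regular and contained in $J := \bigcap_{i=1}^{n} \ann_R \Tor_i^R(M,N)$. The codimension bound gives $V(J) \subseteq \{\fp : \height \fp \ge n+1\}$. On the other hand, the Serre conditions on $N$ and $M \otimes N$ combined with a standard depth-lemma argument on the successive quotients force all associated primes of $N/(x_1, \ldots, x_k)N$ and of $(M \otimes N)/(x_1, \ldots, x_k)(M \otimes N)$ to lie in $\sX^n(R)$. Since no prime of $\sX^n(R)$ contains $J$, prime avoidance lets me extend $\ul{x}$ one step at a time. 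Lemma \ref{van} then reduces the entire task to the single vanishing
\begin{equation*}
\Tor_{n+1}^R(M,\, N/\ul{y}N) = 0, \qquad \ul{y} := \ul{x}^{2^{n-1}}.
\end{equation*}

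This remaining vanishing is the main obstacle. The helpful observation is that $M \otimes (N/\ul{y}N) \cong (M \otimes N)/\ul{y}(M \otimes N)$ inherits Serre's condition $(S_1)$ globally, since $\ul{y}$ is an $M \otimes N$-regular sequence of length $n$ while $M \otimes N$ satisfies $(S_{n+1})$. This places us morally in the setting of Proposition \ref{ausprop}, but the direct application fails because $M$ itself is only known to be a high syzygy of an $(n+1)$-Tor-rigid module rather than $1$-Tor-rigid. I expect the resolution to proceed by induction on $n$: passing to $\bar R := R/(x_1)$, the pair $(M/x_1 M,\, N/x_1 N)$ should satisfy the hypotheses of the theorem with $n$ replaced by $n-1$ (the Serre conditions drop by one, the codimension bound survives, and $M/x_1 M$ is realized as the $(n-1)$-th syzygy over $\bar R$ of a suitable quotient of an appropriate syzygy of $M'$), with the base case $n=0$ being Proposition \ref{ausprop}. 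The delicate point -- and the step I would treat most carefully -- is to verify that the $(n+1)$-Tor-rigidity of $M'$ over $R$ descends to an $n$-Tor-rigid $\bar R$-module compatible with $M/x_1 M$; this descent of Tor-rigidity under the change of rings is non-trivial and will likely require a careful analysis of how the quasi-deformations defining complete intersection dimension interact with the quotient $R \twoheadrightarrow \bar R$.
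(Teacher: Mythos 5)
Your opening moves line up with the paper: you correctly upgrade assumption (3) to $\codim \Tor_i^R(M,N)\ge n+1$ for all $i\ge 1$ via Lemma \ref{codim}, observe that $(n+1)$-Tor-rigidity of $M'$ reduces everything to $n+1$ consecutive vanishings, take a suitable sequence $\ul{x}$ from the intersection of annihilators that is regular on $N$ and $M\otimes_R N$, and invoke Lemma \ref{van} to distill the task to the single vanishing $\Tor_{n+1}^R(M, N/\ul{y}N)=0$. But that distilled vanishing is exactly where the proposal stops having content, and the route you sketch for it does not close the gap.

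The inductive idea of passing to $\bar R = R/(x_1)$ with the pair $(M/x_1M,\ N/x_1N)$ faces two obstacles you identify but do not resolve, and at least one of them is fatal. First, relating $\Tor^R(M, N/\ul{y}N)$ to $\Tor^{\bar R}(M/x_1M,\ N/x_1N)$ via change of rings requires $x_1$ to be $M$-regular, but your construction only produces $x_1$ regular on $N$ and on $M\otimes_R N$; regularity on $M$ is not available (and the theorem's hypotheses give no way to force it). Second, there is no reason $M'/x_1M'$ (or any natural replacement) should be $n$-Tor-rigid over $\bar R$: Tor-rigidity does not descend along $R\twoheadrightarrow R/(x_1)$ in general, and nothing in the hypotheses even guarantees $\bar R$ satisfies $(S_{n-1})$, so the inductive hypothesis is not visibly applicable. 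The proposal explicitly flags this descent as ``non-trivial'' and ``likely requiring careful analysis,'' but that analysis is the entire proof; left unsupplied, the argument is incomplete.

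The paper sidesteps both obstacles by not working with the raw pair $(M,N)$. Using \cite[Proposition 2.4]{DS}, it first embeds $N$ in a free module $F$ with cokernel $C$ satisfying $(S_{n-1})$ and $\Ext^1_R(C,R)=0$, and then runs the whole argument for the pair $(M',C)$. This buys two things you cannot get by working with $N$ directly. The crucial starting vanishing $\Tor_1^R(M,C)\cong\Tor_{n+1}^R(M',C)=0$ comes for free, because $\Tor_1^R(M,C)$ injects into $M\otimes_R N$, is torsion by the codimension bound, and $M\otimes_R N$ is torsion-free. And the remaining vanishing $\Tor_1^R(M',C)=0$ is obtained by a direct depth-chase along the truncated free resolution of $M'$ tensored with $C$, using that $\depth_{R_\fp}(M\otimes_R C)_\fp\ge n$ at the potential associated prime of $\Tor_1^R(M',C)$ -- no change of rings and no descent of rigidity is needed. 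The sequence $\ul{x}$ is then taken to be regular on $C$ and $M\otimes_R C$ (not on $M$), and Lemma \ref{van} is applied to $(\Omega M',C)$ with a sequence of length $n-1$, not to $(M,N)$ with a sequence of length $n$. In short: the opening reductions are right, but the proposal lacks the key idea -- replacing $N$ by the pushforward $C$ -- and without it the final vanishing is left unproven.
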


\begin{proof}
If $n \ge \dim R$, there is nothing to prove by the assumption (3).
We may assume that $n < \dim R$.

First consider the case of $n=0$.
Using the same argument as in the proof of Proposition \ref{ausprop}, we may assume $N$ is torsion-free and hence satisfies $(S_1)$ because $R$ satisfies $(S_1)$. 
It follows from \cite[Proposition 2.4]{DS} and the assumption (1) that there is a short exact sequence $0 \to N \to F \to C \to 0$ with $F$ free.
Then the same proof as in Proposition \ref{ausprop} works and we complete the proof in this case. 

Next let us consider the case of $n \ge 1$.
Again using \cite[Proposition 2.4]{DS}, there is a short exact sequence
\begin{align} \label{lfa}
0 \to N \to F \to C \to 0 \tag{$*$}
\end{align}
with $F$ free, $C$ satisfies $(S_{n-1})$ and $\Ext^1_R(C, R) = 0$.
Since $N$ satisfies $(S_n)$, $\cid_{R_\fp}(N_\fp) = 0$ for all $\fp \in \sX^n(R)$.
Thus, $\Ext_{R_\fp}^1(C_\fp, R_\fp) = 0$ shows that $\cid_{R_\fp}(C_\fp) = 0$ for all $\fp \in \sX^n(R)$ by \cite[Lemma 1.1.10]{Chr} and \cite[Theorem 1.4]{AGP}.
From Lemma \ref{codim}, we have $\codim \Tor_{i}(M', C) \ge n+1$ for all $i \ge 1$.
Tensoring $M$ with the above sequence (\ref{lfa}), we get an exact sequence
$$
0 \to \Tor_1^R(M, C) \to M \otimes_R N \to M \otimes_R F \to M \otimes_R C \to 0.
$$
Since $M\otimes_R N$ satisfies $(S_{n+1})$, and $\Tor_1^R(M, C) \cong \Tor_{n+1}^R(M', C)$ is torsion, we get $\Tor_1^R(M, C) = 0$.

\if0\old{\begin{claim}
Set $\fa:= \bigcap_{1 \le i \le n} \ann \Tor_{i}^R(M', C)$.
Then
$$
\grade(\fa, C) \ge n-1 \mbox{ and }\grade(\fa, M \otimes_R C) \ge n.
$$
\end{claim}

\begin{proof}[Proof of Claim 1]
It follows from $\codim \Tor_{>0}(M', C) \ge n+1$ that $V(\fa)$ consists only prime ideals of height at least $n+1$.
Since $C$ is ($n-1)$-torsionfree and in paricular satisfies $(S_{n-1})$, one has
\begin{align*}
\grade(\fa, C) &= \inf \{\depth_{R_\fp} C_\fp \mid \fp \in V(\fa)\} \\
&\ge \inf \{\depth_{R_\fp} C_\fp \mid \height \fp \ge n+1 \} \ge n-1.
\end{align*}

Fix a prime ideal $\fp$ with height at least $n+1$.
If $\height \fp \ge n+1$, by the depth lemma, the exact sequence
$$
0 \to (M \otimes_R N)_\fp \to (M \otimes_R F)_\fp \to (M \otimes_R C)_\fp \to 0
$$
implies that $\depth_{R_\fp} (M \otimes_R C)_\fp \ge n$.
Thus,
\begin{align*}
\grade(\fa, M \otimes_R C) &= \inf \{\depth_{R_\fp} (M \otimes_R C)_\fp \mid \fp \in V(\fa)\} \\
&\ge \inf \{\depth_{R_\fp} (M \otimes_R C)_\fp \mid \height \fp \ge n+1\} \ge n.
\end{align*}
\end{proof}}
\fi

Next, we prove that $\Tor_i^R(M', C) = 0$ for $i = 2, \ldots, n+1$.
To this end, we may assume $n \ge 2$.
Set $\fa:= \bigcap_{1 \le i \le n} \operatorname{\mathrm{ann}} \Tor_{i}^R(M', C)$.
It follows from $\codim \Tor_{i}^R(M', C) \ge n+1$ for all $i \ge 1$ that $\rV(\fa)$ consists only prime ideals of height at least $n+1$.
Using $(S_{n-1})$ condition on $C$, one has the following (in)equalities:
\begin{align*}
\grade(\fa, C) &= \inf \{\depth_{R_\fp} C_\fp \mid \fp \in V(\fa)\} \ge \inf \{\depth_{R_\fp} C_\fp \mid \height \fp \ge n+1 \} \ge n-1.
\end{align*}
Here, the first equality uses \cite[Proposition 1.2.10 (a)]{BH}.
On the other hand, for a prime ideal $\fp$ with height at least $n+1$, the depth lemma applying to the short exact sequence
$$
0 \to (M \otimes_R N)_\fp \to (M \otimes_R F)_\fp \to (M \otimes_R C)_\fp \to 0
$$
implies that $\depth_{R_\fp} (M \otimes_R C)_\fp \ge n$.
Thus the similar argument as above shows that $\grade (\fa, M \otimes_R C) \ge n$.
Therefore we can take a sequence $\underline{x}:= x_1, \ldots, x_{n-1}$ of elements from $\fa$ which is regular on both $C$ and $M \otimes_R C$.
Then we get $\Tor_{n}^R(\Omega M', C/ \ul{x}^{2^{n-2}} C) \cong \Tor_1^R(M, C/ \ul{x}^{2^{n-2}} C) = 0$ by Lemma \ref{lem1}.
Because $\ul{x}$ is taken from the annihilators of $\Tor_i^R(\Omega M', C) \cong \Tor_{i+1}^R(M', C)$ for $ 1 \le i \le n-1$, Lemma \ref{van} gives us that $\Tor_i^R(M', C) \cong \Tor_{i-1}^R(\Omega M', C) = 0$ for $2 \le i \le n+1$.

\if0\begin{claim*}\label{claim3}
$\Tor_1^R(M, C/\underline{x}^{2^{n-2}}C) = 0$.
\end{claim*}

\begin{proof}[Proof of Claim]
The short exact sequence $0 \to C \xrightarrow{x_1^{2^{n-2}}} C \to C/x_1^{2^{n-2}} C \to 0$ yields
$$
\Tor_1^R(M, C) = 0 \to \Tor_1^R(M, C/x_1^{2^{n-2}} C) \to M \otimes_R C \xrightarrow{x_1^{2^{n-2}}} M \otimes_R C \to M \otimes_R C/x_1^{2^{n-2}} C \to 0.
$$ 
Hence $\Tor_1^R(M, C/x_1^{2^{n-2}} C) = 0$ and $M \otimes_R C/x_1^{2^{n-2}} C \cong (M \otimes_R C)/x_1^{2^{n-2}}(M \otimes_R C)$.
In particular, $x_2^{2^{n-2}}$ is regular on $M \otimes_R C/x_1^{2^{n-2}} C$.
Thus, the same argument as above works for $C/x_1^{2^{n-2}} C$ and we are done by induction.
\end{proof}\fi

To use the $(n+1)$-Tor-rigidity of $M'$, it remains to show $\Tor_1^R(M', C) = 0$.
If this is true, the vanishing $\Tor_i^R(M', C) = 0$ for $ 1 \le i \le n+1$ implies that $\Tor_{i}^R(M', C) = 0$ for all $i \ge 1$ and hence we conclude that $\Tor_i^R(M, N) = 0$ for all $i \ge 1$.
Assume the contrary that $\Tor_1^R(M', C) \neq 0$ and take $\fp \in \Ass \Tor_1^R(M', C)$.
Notice that $\height \fp \ge n+1$ as $\codim \Tor_1^R(M', C) \ge n+1$.
Let $F_\cdot$ be a free resolution of $M'$ and decompose it into short exact sequences
$
0 \to \Omega^{k+1}M' \to F_k \to \Omega^k M' \to 0
$
for $k \ge 0$.
Tensoring these sequences with $C$, we obtain the following exact sequences
\begin{align*}
0 \to \Tor_1^R(M', C) \to \Omega M' \otimes_R C \to &F_0 \otimes_R C \to M' \otimes_R C \to 0 \\
0 \to \Omega^2 M' \otimes_R C \to &F_1 \otimes_R C \to \Omega M' \otimes_R C \to 0 \\
&\cdots \\
0 \to M \otimes_R C \to &F_{n-1} \otimes_R C \to \Omega^{n-1} M' \otimes_R C \to 0.
\end{align*}
Here we use $\Tor_1^R(\Omega^{i} M', C) \cong \Tor_{i+1}^R(M', C) = 0$ for $1 \le i \le n$.
It has been already explained above that $\depth_{R_\fp} (M \otimes_R C)_\fp \ge n$ because $\height \fp \ge n+1$.
Using the depth lemma to the above sequences, we obtain
$
\depth_{R_\fp} (\Omega M' \otimes_R C)_\fp \ge 1
$
This contradicts to $\fp \in \Ass \Tor_1^R(M', C)$.
Thus we conclude that $\Tor_1^R(M', C) =0$ and we are done.
\end{proof}

\if0
\begin{rem}
If $R$ satisfies $(S_1)$, then Theorem \ref{2srt} also holds for $n=0$.

Indeed, since $M \otimes_R N$ satisfies $(S_1)$, we may assume $N$ satisfies $(S_1)$ by replacing $N$ with its torsion-free part; see the proof of Proposition \ref{ausprop}.
Then $N$ is embeded into a free $R$-module and the same argument works.
\end{rem}\fi


\begin{ex}
Let $k$ be a field, $R := k[[x,y]]/(xy)$ and $M:= R/(x)$.
Note that $M$ is maximal Cohen-Macaulay having finite complete intersection dimension.
Moreover $M$ is the first syzygy of an $2$-Tor-rigid module $R/(y)$. 
Then $M \otimes_R M \cong M$ satisfies $(S_2)$ but $\Tor_1^R(M, M) \cong k \neq 0$.
This explains that the consition (3) in Theorem \ref{2srt} is necessary.
Indeed, $\codim \Tor_i^R(M, M) =1$ for all odd $i$. 	
\end{ex}

For the rest of this paper, we will give several applications of the main theorem using $n$-Tor-rigid modules appeared in Example \ref{ex}.

First consider the case of $n=c$ for complete intersection local rings of codimension $c$.
As we have explained in the introduction that this case of Theorem \ref{2srt} recovers Dao's result.

\begin{cor}[{cf. \cite[Theorem 7.6]{Dao}}]\label{srt}
Let $R$ be a complete intersection local ring of codimension $c$ and let $M, N$ be finitely generated $R$-modules.
Assume: 
\begin{enumerate}[\rm(a)]
\item
$M$ and $N$ satisfy $(S_{c})$.
\item
$\codim \Tor_{i}^R(M, N) \ge c+1$ for all $i \gg 0$.
\end{enumerate}
If $M \otimes_R N$ satisfies $(S_{c+1})$, then $\Tor_{i}^R(M, N) = 0$ for all $i \ge 1$.
\end{cor}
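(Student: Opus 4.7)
The plan is to deduce Corollary \ref{srt} by a direct application of Theorem \ref{2srt} with $n = c$; every hypothesis of that theorem can be read off from the assumptions of the corollary together with two standard facts about complete intersections, so the work is entirely one of verification.

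For hypothesis (1) of Theorem \ref{2srt}, note that since $R$ is a complete intersection it is Cohen-Macaulay, and therefore satisfies $(S_c)$ (and in the degenerate case $c=0$, certainly $(S_1)$). For any $\fp \in \sX^c(R)$, the localization $R_\fp$ is itself a complete intersection of codimension at most $c$, so by the Avramov--Gasharov--Peeva proposition stated just before Definition \ref{ntr} we have $\cid_{R_\fp}(N_\fp) < \infty$. Combined with assumption (a) that $N$ satisfies $(S_c)$, this gives condition (1).

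For hypothesis (2), Example \ref{ex}(a) ensures that \emph{every} finitely generated $R$-module is $(c+1)$-Tor-rigid, so any candidate module $M'$ automatically satisfies the rigidity requirement. What remains is to realize $M$ as a stable $c$th syzygy, and for this I would invoke the Evans--Griffith syzygy theorem: over the Gorenstein ring $R$, a finitely generated module satisfies $(S_c)$ if and only if it is stably isomorphic to a $c$th syzygy. Assumption (a) then supplies some $M'$ with $M \oplus F \cong \Omega^c M' \oplus G$ for free modules $F, G$, as required.

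Finally, hypothesis (3) of Theorem \ref{2srt} is exactly assumption (b). Once (1)--(3) are in hand, Theorem \ref{2srt} yields the vanishing $\Tor_i^R(M, N) = 0$ for all $i \geq 1$, completing the proof. No step here is genuinely delicate; the only non-formal input is the Evans--Griffith characterization of $(S_c)$-modules as stable $c$th syzygies over a Gorenstein ring, a well-known fact already alluded to in the introduction as part of the motivation for the main theorem.
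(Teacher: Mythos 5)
Your proof is correct and follows the same route as the paper: apply Theorem~\ref{2srt} with $n=c$, using that complete intersections are Cohen--Macaulay (hence $(S_n)$ for all $n$), that $\cid_{R_\fp}(N_\fp)<\infty$ over the complete intersection $R_\fp$, that over a codimension-$c$ complete intersection every module is $(c+1)$-Tor-rigid, and that $(S_c)$ characterizes stable $c$th syzygies over a Gorenstein ring. One small bibliographic quibble: the ``syzygy theorem'' of Evans--Griffith usually refers to the bound on ranks of syzygies; the fact you actually need (that over a Gorenstein local ring a finitely generated module satisfies $(S_c)$ if and only if it is, up to free summands, a $c$th syzygy) is more commonly attributed to Auslander--Bridger via the notion of $n$-torsionfree modules, or, in the finite projective dimension setting, to Evans--Griffith's book. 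The paper's own terse proof relies on the same fact (and in fact contains a slip, writing ``$(c+1)$st syzygy'' where $c$th is meant, as the correct statement appears in the introduction).
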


\begin{proof}
The condition (1) in Theorem \ref{2srt} follows from \cite[Proposition 1.6]{AGP}.
Since $R$ is complete intersection, the assumption (a) shows that $M$ is (stably) isomorphic to the $(c+1)$st syzygy of some finitely generated $R$-module $M'$.	
It follows from Example \ref{ex}(1) that $M'$ is $(c+1)$-Tor-rigid module.
Thus, the condition (2) in Theorem \ref{2srt} is satisfied.
\end{proof}

For the case of $n=c-1$ over a complete intersection local ring, the main theorem also recovers the result by Celikbas.

\begin{cor}[{cf. \cite[Theorem 3.4]{Cel}}]
Let $R$ be a complete intersection local ring of codimension $c \ge 1$ whose completion is a quotient of an unramified regular local ring.
Let $M$ and $N$ be finitely generated $R$-modules.
Assume: 
\begin{enumerate}[\rm(a)]
\item
$M$ and $N$ satisfy $(S_{c-1})$.
\item
$\codim \Tor_{i}^R(M, N) \ge c$ for all $i \gg 0$.
\end{enumerate}
If $M \otimes_R N$ satisfies $(S_{c})$, then $\cx_R(M)= \cx_R(N) =c$ or $\Tor_{i}^R(M, N) = 0$ for all $i \ge 1$.
\end{cor}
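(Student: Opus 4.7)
The plan is to apply Theorem \ref{2srt} with $n = c-1$, producing the dichotomy as follows: show that whenever $\cx_R(M) < c$ or $\cx_R(N) < c$, the Tor vanishing already holds; since every finitely generated module over a complete intersection of codimension $c$ has complexity at most $c$, the remaining case is precisely $\cx_R(M) = \cx_R(N) = c$.

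First, suppose that $\cx_R(M) < c$. The conditions on $N$ and $R$ demanded by Theorem \ref{2srt} are automatic here: $R$ is Cohen--Macaulay (being complete intersection), so it satisfies $(S_{c-1})$ for $c \ge 2$ and $(S_1)$ for $c = 1$; $N$ satisfies $(S_{c-1})$ by (a); $\cid_{R_\fp}(N_\fp) < \infty$ at every prime since $R$ is complete intersection; and condition (3) of Theorem \ref{2srt} is exactly hypothesis (b). The delicate point is condition (2), which requires $M$ to be stably isomorphic to the $(c-1)$-th syzygy of a $c$-Tor-rigid module. Because $M$ satisfies $(S_{c-1})$ over the Cohen--Macaulay ring $R$, the Evans--Griffith syzygy theorem supplies a finitely generated module $M'$ with $M$ stably isomorphic to $\syz^{c-1} M'$. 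Complexity is invariant under stable isomorphism and under shifting by syzygies (it depends only on the asymptotic growth of Betti numbers), so $\cx_R(M') = \cx_R(M) < c$. Example \ref{ex}(b), which crucially uses that the completion of $R$ is a quotient of an unramified regular local ring, then ensures that $M'$ is $c$-Tor-rigid. Theorem \ref{2srt} now delivers $\Tor_i^R(M, N) = 0$ for all $i \ge 1$.

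The case $\cx_R(N) < c$ is handled by the symmetric argument, exchanging the roles of $M$ and $N$ in Theorem \ref{2srt} and invoking the symmetry of Tor. Since $\cx_R(L) \le c$ for every finitely generated $R$-module $L$ over a complete intersection of codimension $c$, the only remaining possibility is $\cx_R(M) = \cx_R(N) = c$, which yields the stated dichotomy. The step to be most careful about is the verification of condition (2): namely the combination of Evans--Griffith's characterization of the $(S_n)$ condition via $n$-th syzygies over a Cohen--Macaulay ring together with the preservation of complexity under syzygy shifts. Both facts are standard, but they form the bridge that turns the $(S_{c-1})$ hypothesis on $M$ into the syzygy-of-rigid-module input needed by Theorem \ref{2srt}.
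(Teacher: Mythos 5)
Your proof is correct and follows essentially the same route as the paper, which simply cites Dao's rigidity result (Example \ref{ex}(b)) and says ``using the similar argument as above.'' You spell out what that similar argument actually requires, and the two points you highlight are precisely the ones that make it work: (i) over a Gorenstein (hence over a complete intersection) local ring, a module satisfying $(S_{c-1})$ is stably isomorphic to a $(c-1)$st syzygy $\syz^{c-1}M'$, and (ii) complexity is preserved under syzygy shift and stable isomorphism, so $\cx_R(M')=\cx_R(M)<c$ and hence $M'$ is $c$-Tor-rigid by Example \ref{ex}(b). Point (ii) in particular is a detail the paper's proof glides past but genuinely needs, since condition (2) of Theorem \ref{2srt} demands rigidity of $M'$ rather than of $M$. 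One small inaccuracy: the syzygy/$(S_n)$ equivalence you invoke is the Auslander--Bridger characterization over a Gorenstein ring, not the Evans--Griffith syzygy theorem, and ``Cohen--Macaulay'' in your phrasing should be ``Gorenstein'' (which holds here since $R$ is a complete intersection); the substance is otherwise right.
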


\begin{proof} 
If $\cx_R (M) < c$ or $\cx_R(N) < c$, then $M$ or $N$ is $c$-Tor-rigid by \cite[Corollary 6.8]{Dao}.
Thus, using the similar argument as above, we can apply Theorem \ref{2srt} for $n=c-1$.
\end{proof}

\if0
\begin{cor}
Let $R$	be a ring satisfying $(S_n)$ and $M$, $N$ finitely generated $R$-module.
Assume:
\begin{enumerate}[\rm(a)]
\item
$M$ and $N$ satisfy $(S_n)$.
\item
Either of $M$ or $N$ has complete intersection dimension zero and complexity at most $n$.
\item
$\codim \Tor_{i}(M, N) \ge n+1$ for all $i \ge 1$.
\end{enumerate}
If $M \otimes_R N$ satisfies $(S_{n+1})$, then $\Tor_{i}^R(M, N) = 0$ for all $i \ge 1$.
\end{cor}

\begin{proof}
If $$	
\end{proof}
\fi

Recall that an ideal $I$ of $R$ is called {\it Burch} if it satisfies $\fm I \neq \fm(I : \fm)$; see \cite{DKT}.
An ideal $I$ is Burch if it is of the form $\fm J$, integrally closed, or $\fm$-primary weakly $\fm$-full ideal; see \cite{DKT} for details.
It has been shown in \cite[Theorem 5(ii)]{Bur} that every Burch ideal $I$ is the first syzygy of a $2$-Tor-rigid module $R/I$.

\begin{cor}
Let $R$ be a noetherian local ring satisfying $(S_1)$, $I$ a Burch ideal, and $N$ a finitely generated $R$-module.
Assume: 
\begin{enumerate}[\rm(a)]
\item
$N$ satisfies $(S_1)$ and $\cid N_\fp < \infty$ for $\fp \in \sX^1(R)$.
\item
$\height I \ge 2$.
\end{enumerate}
If $I \otimes_R N$ satisfies $(S_2)$, then $\pd_R N \le 2$.
\end{cor}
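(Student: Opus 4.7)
The plan is to apply the main theorem, Theorem~\ref{2srt}, with $n=1$, $M := I$, and $M' := R/I$. Since $I$ is Burch, Example~\ref{ex}(d) tells us that $R/I$ is $2$-Tor-rigid, and the tautological short exact sequence
\begin{equation*}
0 \to I \to R \to R/I \to 0
\end{equation*}
exhibits $I$ as (stably isomorphic to) the first syzygy of $R/I$, so condition (2) of Theorem~\ref{2srt} is satisfied. Condition (1) is exactly hypothesis (a). For condition (3), hypothesis (b) forces $I \not\subseteq \fp$ for every $\fp \in \sX^1(R)$, and hence $I_\fp = R_\fp$ and $\Tor_i^R(I, N)_\fp = 0$ for every $i \ge 1$; in particular $\codim \Tor_i^R(I, N) \ge 2$ for all $i \ge 1$. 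Theorem~\ref{2srt} then yields $\Tor_i^R(I, N) = 0$ for all $i \ge 1$.

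Next I would upgrade this vanishing from $I$ to $R/I$. Tensoring $0 \to I \to R \to R/I \to 0$ with $N$ gives a long exact sequence; for $i \ge 2$ this reads $\Tor_i^R(R/I, N) \cong \Tor_{i-1}^R(I, N) = 0$, while for $i = 1$ it produces an inclusion $\Tor_1^R(R/I, N) \hookrightarrow I \otimes_R N$. Since $I \otimes_R N$ satisfies $(S_2)$, its associated primes all have height $0$. But $\Tor_1^R(R/I, N)$ is annihilated by $I$, so its associated primes lie in $V(I)$, where every prime has height at least $2$. These two constraints force $\Ass \Tor_1^R(R/I, N) = \emptyset$, and therefore $\Tor_1^R(R/I, N) = 0$. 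Hence $\Tor_i^R(R/I, N) = 0$ for every $i \ge 1$.

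The remaining step is to deduce $\pd_R N \le 2$ from this complete Tor-vanishing. Here one invokes a Burch-style rigidity statement: the vanishing of $\Tor_i^R(R/I, N)$ for every $i \ge 1$, combined with the Burch condition and the bound $\height I \ge 2$, forces $\pd_R N \le \height I = 2$, in the spirit of the argument originating in \cite{Bur}. This is the step I expect to be the most delicate, since Theorem~\ref{2srt} only yields a vanishing statement; the conversion of that vanishing into a precise bound on the projective dimension is the only piece of the argument requiring a structural input beyond the main theorem.
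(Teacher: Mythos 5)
Your proposal is correct and follows essentially the same route as the paper: apply Theorem~\ref{2srt} with $n=1$, $M=I$, $M'=R/I$ (using Example~\ref{ex}(d) for the $2$-Tor-rigidity of $R/I$ and the observation that $\height I\ge 2$ forces $I$ to be locally free on $\sX^1(R)$), then pass the vanishing of $\Tor_i^R(I,N)$ to $\Tor_{i+1}^R(R/I,N)$ and invoke Burch's theorem to conclude $\pd_R N\le 2$. The only difference is that you additionally prove $\Tor_1^R(R/I,N)=0$ via an associated-primes argument, a step the paper omits because Burch's Theorem 5(ii) already yields the projective-dimension bound from vanishing in two consecutive degrees.
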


\begin{proof}
We apply Theorem \ref{2srt} for $R$-modules $N$ and $M:= I$.
Since $\height I \ge 2$, $I$ is locally free on $\sX^1(R)$ and hence the condition (3) in Theorem \ref{2srt} is satisfied. 
The condition (2) follows from Example \ref{ex}(3).
Thus, Theorem \ref{2srt} shows that $\Tor_{i+1}^R(R/I, N) \cong \Tor_i^R(I, N) = 0$ for $i \ge 1$.
Again using \cite[Theorem 5(ii)]{Bur}, we conclude that $\pd_R N\le 2$.
\end{proof}

\begin{ac}
The author is grateful to Olgur Celikbas for his helpful comments during the preparation of this paper.	
\end{ac}

\end{document}